\definecolor{darkblue}{rgb}{0.1,0.1,0.7}
\definecolor{darkred}{rgb}{0.7,0.1,0.1}
\newtheorem{theorem}{Theorem}[section]
\newtheorem{proposition}[theorem]{Proposition}
\newtheorem{lemma}[theorem]{Lemma}
\newtheorem{corollary}[theorem]{Corollary}
\newtheorem{remark}[theorem]{Remark}
\newtheorem{conjecture}[theorem]{Conjecture}
\numberwithin{equation}{section}
\numberwithin{figure}{section}
\newcommand{\sumtwo}[2]{\sum_{\substack{#1 \\ #2}}} 
\newcommand{\ind}{\mathbf{1}}
\renewcommand{\phi}{\varphi}
\newcommand{\R}{\mathbb{R}}
\newcommand{\IND}{{\bf 1}}
\newcommand{\tmix}{T_{\rm mix}}
\DeclareMathOperator{\cov}{Cov}
\DeclareMathOperator{\Ent}{Ent}
\newcommand{\grad}{\nabla}
\newcommand{\be}{\begin{equation}}
\newcommand{\cB}{\ensuremath{\mathcal B}}
\newcommand{\cE}{\ensuremath{\mathcal E}} 
\newcommand{\cG}{\ensuremath{\mathcal G}}
\newcommand{\cL}{\ensuremath{\mathcal L}}
\newcommand{\cQ}{\ensuremath{\mathcal Q}} 
\newcommand{\cS}{\ensuremath{\mathcal S}}
\newcommand{\bbN}{{\ensuremath{\mathbb N}} }
\newcommand{\bbR}{{\ensuremath{\mathbb R}} }
\newcommand{\bbZ}{{\ensuremath{\mathbb Z}} }
\newcommand{\si}{\sigma} 
\newcommand{\ent}{{\rm Ent} } 
\newcommand{\smallent}{{\rm ent} } 
\newcommand{\smallvar}{{\rm var} } 
\newcommand{\smallcov}{{\rm cov} } 
\newcommand{\var}{{\rm Var} }
\let\a=\alpha \let\b=\beta   \let\d=\delta  
      \let\k=\kappa  \let\l=\lambda
\let\r=\rho      
     \let\L=\Lambda 
\let\O=\Omega
\def\({\left(}
\def\){\right)}
\newcommand{\PAR}[1]{{{\left(#1\right)}}} 
\title{Entropy inequalities for random walks and permutations}
\author{Alexandre Bristiel}
\address{ENS Lyon, France}
\email{alexandre.bristiel@ens-lyon.fr}
\author{Pietro Caputo}
\address{Department of Mathematics and Physics, Roma Tre University, Largo San Murialdo 1, 00146 Roma, Italy.}
\email{pietro.caputo@uniroma3.it}
\dedicatory{Dedicated to the memory of Dima Ioffe}
\begin{document}

\begin{abstract}
We consider a new functional inequality controlling the rate of relative entropy decay for random walks, the interchange process
and more general block-type dynamics for permutations. The inequality lies between the classical logarithmic Sobolev inequality and the modified  logarithmic Sobolev inequality, roughly interpolating between the two as the size of the blocks grows. Our results suggest that the new inequality may have some advantages with respect to the latter well known inequalities when multi-particle processes are considered. We prove a strong form of tensorization for independent particles interacting through synchronous updates. Moreover, for block dynamics on permutations we compute the optimal constants  in all mean field settings, namely whenever the rate of update of a block depends only on the size of the block. Along the way we establish the independence of the spectral gap on the number of particles for these mean field processes. As an application of our entropy inequalities we prove a new subadditivity estimate for permutations, which implies a sharp upper bound on the permanent of arbitrary matrices with nonnegative entries, thus resolving a well known  conjecture.    

%
%
%
 
\end{abstract}
\keywords{Entropy, logarithmic Sobolev inequalities, spectral gap, permutations} 
\subjclass[2010]{82B20, 82C20, 39B62}
\maketitle
\thispagestyle{empty}

\section{Introduction and main results}\label{sec:01}
Given a finite, weighted, undirected graph $G$, consider the continuous time Markov chain with infinitesimal generator
\begin{align}\label{L}
\cL f(x)=\sum_{y\in V}c_{xy}[f(y)-f(x)],
\end{align}
where $V$ denotes the vertex set, $c_{xy}\geq 0$ is the weight along the undirected edge $xy$, and $f:V\mapsto\bbR$ is a generic function. 
We refer to this process as the random walk on $G$, or as the single particle process on $G$. A fundamental quantity in the analysis of random walks on graphs is the spectral gap $\l(G)$, defined as the second smallest eigenvalue of the graph Laplacian $-\cL$. The constant $\l(G)$ is also characterized as the largest constant $\l\ge 0$ such that for all $  f:V\mapsto\bbR$,
\begin{align}\label{vargap}
\l\,\var f \leq \frac2{n} \sum_{x,y\in V}c_{xy}\smallvar_{xy} f,
\end{align}
where $n=|V|$ is the number of vertices, $\var f=\mu(f^2)-\mu(f)^2$ denotes the variance of $f$ with respect to the uniform distribution $\mu$ over $V$, and $\smallvar_{xy} f=\frac14(f(x)-f(y))^2$ is the local variance of $f$ at the edge $xy$. In this paper we introduce an entropic analogue of the  inequality \eqref{vargap}. Namely, we call $\k(G)$ the largest constant $\k\ge 0$ such that for all $  f:V\mapsto\bbR_+$,
\begin{align}\label{kent}
\k\,\ent f \leq \frac2{n} \sum_{x,y\in V}c_{xy}\,\smallent_{xy} f,
\end{align}
where $\ent f = \mu\left[f\log(f/\mu(f))\right]$ denotes the relative entropy of $f$ with respect to $\mu$ and 
\begin{align}\label{ents}
\smallent_{xy} f=\frac12\,f(x)\log \frac{f(x)}{\frac12(f(x)+f(y))} + \frac12\,f(y)\log \frac{f(y)}{\frac12(f(x)+f(y))}
\end{align}
is the local entropy of $f$ at the edge $xy$. The quantity $\smallent_{xy} f$ is $\frac12(f(x)+f(y))$ times the relative entropy of the Bernoulli distribution with parameter $p=f(x)/(f(x)+f(y))$ with respect to the Bernoulli distribution with parameter $1/2$, and it is thus a natural measure of local departure from uniformity of $f$. We refer to $\k(G)$ as the {\em entropy constant} of the graph $G$. 

A standard linearization argument shows that $\k(G)\leq \l(G)$ for any $G$.
The classical {\em logarithmic Sobolev inequality}, which characterizes the hypercontractivity of the semigroup $e^{t\cL}$, is obtained as in \eqref{kent} by replacing $\smallent_{xy} f$ with $\smallvar_{xy} \sqrt f$, while the {\em modified logarithmic Sobolev inequality},  which characterizes the rate of exponential decay of the relative entropy along the semigroup $e^{t\cL}$, is obtained as in \eqref{kent} by replacing $\smallent_{xy} f$ with the local covariance $\smallcov_{xy} (f,\log f)=\frac14(f(x)-f(y))\log(f(x)/f(y))$; see e.g.\ \cite{DS,BobTet}. We write $\b(G)$ and $\varrho(G)$ for the associated graph constants. 
Since
\begin{align}\label{ineqents}
2\log(2)\smallvar_{xy}\sqrt f\leq \smallent_{xy}  f\leq 2\,\smallvar_{xy} \sqrt f \leq \frac12 
\,
\smallcov_{xy} (f,\log f),
\end{align}
see Lemma \ref{lem:ele} below, for all weighted graphs $G$ the constant $\k(G)$ satisfies 
\begin{align}\label{ineqentsk}
2\log(2)\b(G)\leq \k(G)\leq 2\b(G)\leq \frac12\,\varrho(G)\leq \l(G).
\end{align}
As we will see, these relations change when considering generalizations of our inequality to hypergraphs. Moreover, things become particularly interesting when considering generalizations to multi-particle processes. 

\subsection{Hypergraphs}
The hypergraph  version is defined as follows. Given a collection of nonnegative weights $\a=\{\a_A,\,A\subset V\}$, we write $\k[\a]$ for 
the largest constant $\k\ge 0$ such that for all $f:V\mapsto\bbR_+$,
\begin{align}\label{kentalpha}
\k\,\ent f \leq \frac{1}{n} \sum_{A\subset V}|A|\a_A\,\smallent_{A} f,
\end{align}
where 
\begin{align}\label{enta}
\smallent_A f=\frac1{|A|}\sum_{x\in A}f(x)\log (f(x)/\bar f _A)\,,\qquad \bar f _A=\frac1{|A|}\sum_{x\in A}f(x),
\end{align}
is the entropy on the block $A\subset V$.
Note that in the case of blocks of size 2, that is when $\a_A=0$ unless $|A|=2$, then \eqref{kentalpha} is equivalent to \eqref{kent} with the choice $c_{xy}=\a_{A}/2$ whenever $A=xy$.  The reason for the special choice of normalization in \eqref{kentalpha} will become apparent  below. 

The collection of weights $\a$ is viewed as a vector indexed by the subsets of $V$. We refer to the case when the weights $\a_A$ depend only on $|A|$ as the {\em mean field} case. If $\ell\in\{2,\dots,n\}$ is fixed then we write $\a^\ell$ for the vector $\a$ defined by $\a_A=\ind_{|A|=\ell}$, so that the general mean field case has the form $\a=\sum_{\ell=2}^n w_\ell\,\a^\ell$ for some nonnegative vector $w=(w_2,\dots,w_n)$. 
\begin{theorem}[One particle, mean field case]\label{th:meanfield1}
Suppose $\a=\sum_{\ell=2}^n w_\ell\,\a^\ell$ for some nonnegative vector $w=(w_2,\dots,w_n)$. Then 
\begin{align}\label{ent1meanfield}
\k[\a]=\sum_{\ell=2}^n w_\ell \,\frac{\binom{n-1}{\ell-1}\log \ell}{\log n},
\end{align}
and equality in \eqref{kentalpha} is uniquely achieved at multiples of a Dirac mass.
\end{theorem}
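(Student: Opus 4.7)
The plan is to reduce the general mean-field statement to the single-block-size case $\a=\a^\ell$ and then recast the inequality as an entropy-contraction estimate, which I would bound sharply through a variational argument.

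Because both sides of \eqref{kentalpha} depend linearly on the weight vector $\a$ (while $\ent f$ does not), one has $\k\bigl[\sum_\ell w_\ell \a^\ell\bigr]\ge\sum_\ell w_\ell\,\k[\a^\ell]$. A direct evaluation of \eqref{kentalpha} on the Dirac test function $f=n\IND_{x_0}$ yields, for every $\ell$ separately, $\ent f=\log n$ and $\tfrac{\ell}{n}\sum_{|A|=\ell}\smallent_A f=\binom{n-1}{\ell-1}\log\ell$, so that $f=n\IND_{x_0}$ saturates \eqref{kentalpha} for every $\a^\ell$ and, by summation, for the full $\a=\sum_\ell w_\ell\a^\ell$ with the claimed constant. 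This pins down the matching upper bound on $\k[\a]$ and reduces the theorem to proving $\k[\a^\ell]=\binom{n-1}{\ell-1}\log\ell/\log n$ for a single $\ell$, together with uniqueness of the Dirac extremizer in that case.

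Fix $\ell$, let $\nu$ denote the uniform measure on $\binom{V}{\ell}$, and write $\bar f(A)=\tfrac{1}{|A|}\sum_{x\in A}f(x)$. Observing that $\mu$ is the $V$-marginal of the joint law ``$A\sim\nu$, $X\sim\mathrm{Unif}(A)$'' yields the chain-rule decomposition
\[
\ent f \;=\; \operatorname{Ent}_\nu \bar f + \E_{A\sim\nu}\,\smallent_A f,
\]
with $\operatorname{Ent}_\nu \bar f := \nu\bigl[\bar f\log(\bar f/\mu(f))\bigr]$. Combined with the identity $\tfrac{\ell}{n}\sum_{|A|=\ell}\smallent_A f = \binom{n-1}{\ell-1}\,\E_{A\sim\nu}\smallent_A f$, the target lower bound $\k[\a^\ell]\ge\binom{n-1}{\ell-1}\log\ell/\log n$ is equivalent to the entropy contraction
\[
\operatorname{Ent}_\nu \bar f \;\le\; \frac{\log(n/\ell)}{\log n}\,\ent f,
\]
to be proved for every $f:V\to\bbR_+$, with strict inequality unless $f$ is a multiple of a Dirac mass (the Dirac case giving equality by direct computation, as verified above).

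To establish this contraction, I would normalize $\mu(f)=1$ and optimize the scale-invariant ratio $R(f) = \operatorname{Ent}_\nu\bar f/\ent f$ over the simplex $\{f\ge 0:\mu(f)=1\}$. At an interior critical point, the Euler--Lagrange equation gives a pointwise identity linking $\E_{A\ni x}[\log\bar f_A]$ and $R(f)\log f(x)$ that, by the $S_n$-invariance of the problem, admits only the trivial constant solution $f\equiv 1$ (at which $R$ is $0/0$ and hence indeterminate). The supremum of $R$ must therefore be attained on the boundary of the simplex (where some coordinates of $f$ vanish), and a support-reduction argument then identifies the Dirac as the unique maximizer with $R=\log(n/\ell)/\log n$. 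The main obstacle is this final rigidity step: while $R\le 1$ is immediate from Jensen's inequality applied to $t\mapsto t\log t$, extracting the sharp constant $\log(n/\ell)/\log n$ and pinning down the Dirac as the unique extremizer requires a delicate tracking of how $\ent f$ and $\operatorname{Ent}_\nu\bar f$ respond to infinitesimal mass redistributions, most cleanly carried out through a Schur-majorization / two-coordinate transfer argument exploiting the $S_n$-invariance of the averaging.
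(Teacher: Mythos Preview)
Your reduction to the single block size $\a^\ell$ and the chain-rule decomposition $\ent f=\operatorname{Ent}_\nu\bar f+\E_{A\sim\nu}\smallent_A f$ are both correct, and the reformulation of the target as the contraction $\operatorname{Ent}_\nu\bar f\le\tfrac{\log(n/\ell)}{\log n}\ent f$ is a clean restatement of what must be proved. This is a different organization from the paper, which instead runs an induction on $n$: it decomposes $\ent f$ along a single occupation variable $\eta_x$, averages over $x$, and closes the induction using the special case $\ell=n-1$ of your contraction inequality (this is exactly Lemma~\ref{lem:tec2}, via Remark~\ref{rem:eqcor}).

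The genuine gap is the variational step. Your assertion that, by $S_n$-invariance of the functional, the Euler--Lagrange system at an interior point ``admits only the trivial constant solution $f\equiv 1$'' is not justified and is in general false: symmetry of a variational problem does not force its critical points to be symmetric (think of any symmetric double well). In fact the paper spends the bulk of the work (the proof of Lemma~\ref{lem:tec2}) analyzing precisely this Euler--Lagrange system in the $\ell=n-1$ case, and the argument is delicate: one shows first that any maximizer is at most two-valued by reducing the stationarity condition to a one-variable equation with at most two roots, then that the larger value is taken exactly once, and only then that the remaining one-parameter family is maximized at the Dirac. None of this comes from symmetry alone. Your alternative suggestion, a Schur-majorization / two-coordinate transfer, runs into the difficulty that both $\ent f$ and $\operatorname{Ent}_\nu\bar f$ are Schur-convex, and the ratio of two Schur-convex functions has no reason to be Schur-monotone; so one cannot simply push mass toward a single vertex and conclude.

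In short, you have correctly identified an equivalent (and arguably more natural) target inequality, but the actual proof of that inequality is missing; the paper's route is to prove only its $\ell=n-1$ instance by a hands-on analysis and then leverage it inductively to recover all $\ell$.
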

\begin{remark}\label{rem:K_n}
{\em In particular, the complete graph $G=K_n$, which corresponds to $c_{xy}\equiv 1$, or $\a=2\a^2$, satisfies for all $n\geq 2$,
 \begin{align}\label{kKn}
\k(K_n)=\frac{2(n-1)\log 2}{\log n}.
\end{align}
We recall that the (modified) log-Sobolev constants of the complete graph $K_n$ satisfy $\b(K_2)=1$, $\varrho(K_2)=4$ and, for $n>2$,
 \begin{align}\label{kKnls}
\b(K_n)=\frac{n-2}{\log (n-1)},\qquad n\leq \varrho(K_n)\leq 2n,
\end{align}
see \cite{DS,BobTet}. An explicit value for $\varrho(K_n)$ is not known.}\end{remark}
\begin{remark}\label{rem:1particle}
{\em  
The hypergraph version of the random walk generator \eqref{L} is given by 
  \begin{align}\label{Lalpha}
\cL_\a f(x)=\sum_{A\subset V: \,A\ni x}\a_{A}[\bar f_A-f(x)].
\end{align}
Note that $\cL_\a$ coincides with \eqref{L} if
  \begin{align}\label{c_alpha}
c_{xy}=\sum_{A\subset V: \,A\ni x,y}\frac{\a_{A}}{|A|}.
\end{align}
Functional inequalities such as spectral gap and (modified) log-Sobolev for this process can all be expressed by means of the Dirichlet form of the operator $\cL_\a$.
Therefore, the spectral gap inequality obtained as in  \eqref{kentalpha} by replacing $\ent f$ with $\var f$ and $\smallent_A f$ with 
$\smallvar_A f = |A|^{-1}\sum_{x\in A}(f(x)-\bar f_A)^2$, coincides with \eqref{vargap} with the choice of weights \eqref{c_alpha}. The same applies to the log-Sobolev and modified log-Sobolev when we replace the term $\smallent_A f$ in  \eqref{kentalpha} with 
$\smallvar_A \sqrt f $ and with $\smallcov_A (f,\log f)= |A|^{-1}\sum_{x\in A}(f(x)-\bar f_A)\log f(x)$ respectively. However, it is not the case for the entropy constant $\k[\a]$, that is there is no trivial way of reducing the weighted hypergraph case to the weighted graph case. The inequalities 
$\smallent_A f \leq \smallcov_A (f,\log f)$ and $\smallvar_A \sqrt f \leq \smallent_A f $ imply that the entropy constant is always between the log-Sobolev and the modified log-Sobolev constant, see Lemma \ref{lem:ele2} below. When $|A|>2$ the inequality $\smallent_{xy} f \leq 2\smallvar_{xy} \sqrt f$ has to be replaced by
\begin{align}\label{ineqents2}
\smallent_{A} f\leq \frac{\log(|A|-1)}{1-\frac2{|A|}}\,\smallvar_{A}\sqrt f,
\end{align} 
and a significant discrepancy can occur between our entropy constant and the log-Sobolev constant in the hypergraph case when large sets are involved. 
For a concrete example, consider the mean field case $\a=\a^\ell$ for some $\ell\in\{2,\dots,n\}$. Theorem \ref{th:meanfield1} says that 
\begin{align}\label{eq:meanfield100}
\k[\a^\ell]=\frac{\binom{n-1}{\ell-1}\log \ell}{\log n}.
\end{align}
Comparing with \eqref{kKnls}, it follows that the entropy constant $\k[\a^\ell]$ is equivalent (up to a universal constant factor) to $\log \ell$ times the log-Sobolev constant and $\log \ell/\log n$ times the modified log-Sobolev constant, and thus the entropy constant interpolates between these two constants as $\ell$ grows from $2$ to $n$. }
\end{remark}

We turn to a discussion of our results for multi-particle processes. As we shall see, besides the usual tensorization properties satisfied by the (modified) log-Sobolev constants, see e.g.\ \cite{DS,Book_Toulouse,BobTet}, the entropy constant $\k$ enjoys stronger forms of tensorization. We consider two types of interacting random walk models. The first concerns independent walkers interacting through synchronous updates, while the second one can be seen as a constrained version of the first, where particles are not allowed to occupy the same vertex. In the first case the stationary distribution is a product measure, while in the second case it is uniform over permutations.

\subsection{Random walks with synchronous updates}
The synchronous updates model is defined as follows. Fix the number of particles $N\in\bbN$, and let $\O=V^N$ denote the set of vectors $\xi=(\xi_1,\dots,\xi_N)$ such that $\xi_i\in V$. Call $\nu$ the uniform distribution over $\O$, so that 
$\nu=\mu^N$ is the $N$-fold product of the uniform distribution $\mu$  over $V$. We interpret the random variable $\xi_i$ as the position of the $i-$th particle, $i=1,\dots,N$. Thus, particles are labeled. We also use the notation $\eta_A$, $A\subset V$, 
for the set of particle labels $i$ such that $\xi_i\in A$, that is the set of particles in the block $A$. We write $\eta_z=\eta_{\{z\}}$ when the block consists of a single site. Given a collection of nonnegative weights $\a=\{\a_A,\,A\subset V\}$,  the {\em random walks with synchronous updates} on the weighted hypergraph $\a$ evolve as follows. Attach to each set $A\subset V$ independent Poisson clocks with rate $\a_A$, and when block $A$ rings all particles in $A$ simultaneously update their position by choosing independently a uniformly random position in $A$. More formally, this is the continuous time Markov chain with state space $\O$ and with infinitesimal generator 
\begin{align}\label{geninda}
\cQ_\a f=\sum_{A\subset V}\a_A(\nu_{A} f  - f)\,,
\end{align} 
where $f:\O\mapsto\bbR$ and we write  $\nu_{A}f= \nu[f|\eta_z,\,z\notin A]$ for the conditional expectation of $f$ w.r.t.\ $\nu$ given the occupation variables $\eta_z$ at all vertices $z\notin A$. The spectral gap of this process, denoted $\l[\a,N]$ is the largest $\l\geq 0$ such that
for all $f:\O\mapsto\bbR$, 
\begin{align}\label{vargapsy}
\l\,\var f \leq \sum_{A\subset V}\a_A\,\nu\left[\var_A f\right],
\end{align}
where $\var_A f = \nu_A [(f-\nu_A f)^2]$ and $\var f=\var_V f$ denotes the global variance. Similarly, the entropy constant $\k[\a,N]$ is defined as the largest $\k\geq 0$ such that
for all $f:\O\mapsto\bbR_+$,
\begin{align}\label{varentsy}
\k\,\ent f \leq \sum_{A\subset V}\a_A\,\nu\left[\ent_A f\right],
\end{align}
where $\ent_A f = \nu_A [f\log(f/\nu_A f)]$ and $\ent f=\ent_V f$ is the global entropy.  We remark that, when $N=1$, the constant $\k[\a,1]$ coincides with $\k[\a]$ defined in \eqref{kentalpha}. Indeed, in this case $f(\xi)=f(\xi_1)$, and 
\begin{align}\label{varentso}
\ent_A f = \ind_{\xi_1\in A} \smallent_A f\,,\qquad  \nu\left[\ent_A f\right]=\frac{|A|}n\,\smallent_A f.
\end{align} 
The above identities hold for the variance functional as well. Thus, reasoning as in Remark \ref{rem:1particle}, $\l[\a,1]$ coincides with $\l(G)$ where the weighted graph  $G$ is defined by \eqref{c_alpha}. 
Our result below states that this is actually the case for all $N$. 
\begin{theorem}[Independent particles with synchronous updates]\label{th:asyn}
For any weighted hypergraph $\a$, for all $N\in\bbN$,  $$\l[\a,N]=\l[\a,1]\,,\qquad \k[\a,N]=\k[\a,1].$$
\end{theorem}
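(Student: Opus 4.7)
My plan is to prove both identities by induction on $N$, with the base case $N=1$ reducing to the definitions. The direction $\le$ is immediate from a single-variable test: for $g:V\to\bbR_+$ the function $f(\xi)=g(\xi_1)$ satisfies $\ent_\nu f=\ent_\mu g$ and, as in \eqref{varentso}, $\nu[\ent_A f]=(|A|/n)\smallent_A g$, so \eqref{varentsy} specializes to \eqref{kentalpha}, giving $\k[\a,N]\le\k[\a,1]$; the same substitution handles the spectral gap. The content of the theorem is therefore the matching lower bound.

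For the inductive step I would single out the coordinate $\xi_N$ and apply the conditional entropy decomposition
\[
\ent_\nu f \;=\; \ent_\mu\tilde f\;+\;\mu\bigl[\ent_{\nu_{-N}}(f|_{\xi_N})\bigr],\qquad \tilde f:=\nu_{-N}f,
\]
bounding $\ent_\mu\tilde f$ by \eqref{kentalpha} and, for each $\xi_N$, bounding $\ent_{\nu_{-N}}(f|_{\xi_N})$ by the inductive hypothesis with $N-1$ particles. After averaging over $\xi_N$, the induction closes provided that for every block $A\subset V$,
\begin{align*}
\tfrac{|A|}{n}\smallent_A\tilde f\;+\;\nu\bigl[\ent_A^{(N-1)}(f|_{\xi_N})\bigr] \;\le\; \nu[\ent_A^{(N)} f],
\end{align*}
where the superscripts denote the particle count of the underlying block operator. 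I expect this block-wise comparison to be the main obstacle.

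The key structural input is the factorization $\nu_A^{(N)}=P_A^{\otimes N}$, where $P_A:L^2(\mu)\to L^2(\mu)$ is the single-site projection $P_A g(x)=\ind_{x\in A}\bar g_A+\ind_{x\notin A}g(x)$. It holds because, conditionally on $\sigma(\eta_z,z\notin A)$, the particles currently in $A$ are i.i.d.\ uniform on $A$, so the $N$-particle block conditional expectation factorizes coordinate-wise. In particular $\nu_A^{(N)}=P_A^{(N)}\nu_A^{(N-1)}$ and $\nu_{-N}\nu_A^{(N)} f=P_A\tilde f$. Applying the same conditional decomposition to $\ent_\nu(\nu_A^{(N)} f)$ and using the single-particle identity $\ent_\mu\tilde f-\ent_\mu(P_A\tilde f)=(|A|/n)\smallent_A\tilde f$, the block inequality reduces after cancellation to
\[
\mu\bigl[\ent_{\nu_{-N}}(\nu_A^{(N)} f|_{\xi_N})\bigr]\;\le\;\mu\bigl[\ent_{\nu_{-N}}(\nu_A^{(N-1)} f|_{\xi_N})\bigr].
\]
Setting $h=\nu_A^{(N-1)}f$, the two sides coincide on $\{\xi_N\notin A\}$ since $P_A^{(N)}$ acts as the identity on $h|_{\xi_N}$; on $\{\xi_N\in A\}$ the left-hand integrand equals the $\xi_N$-independent quantity $\ent_{\nu_{-N}}(\E_{x\sim\mathrm{Unif}(A)}h|_{\xi_N=x})$ while the right-hand one averages $\ent_{\nu_{-N}}(h|_{\xi_N=x})$ over $x\sim\mathrm{Unif}(A)$, and the inequality between them is Jensen's inequality for the convex functional $h\mapsto\ent_{\nu_{-N}}(h)$. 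This closes the induction.

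The spectral gap identity $\l[\a,N]=\l[\a,1]$ follows by the same induction with $\ent$ replaced by $\var$ throughout; the final Jensen step becomes $\var_{\nu_{-N}}(\E_x Y_x)\le\E_x\var_{\nu_{-N}}(Y_x)$, which is Jensen for the convex variance functional, or equivalently the law of total variance applied to $(x,\xi_{<N})\sim\mathrm{Unif}(A)\otimes\nu_{-N}$.
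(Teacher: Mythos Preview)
Your proof is correct and follows essentially the same route as the paper: induction on $N$, decomposition of $\ent_\nu f$ along one coordinate, application of the inductive hypothesis and the single-particle inequality to the two pieces, and a block-wise convexity inequality to close the induction. The only cosmetic difference is in the final Jensen step: the paper proves the key block inequality by applying convexity of the local functional $\varphi\mapsto\smallent_A\varphi$ to push the averaging $\prod_{i\neq 1}\mu_{i,V}$ inside, whereas you apply convexity of the global $(N-1)$-particle functional $g\mapsto\ent_{\nu_{-N}}(g)$ to push the average over $x\sim\mathrm{Unif}(A)$ inside; the two formulations are equivalent and lead to the same conclusion.
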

\begin{remark}\label{rem:asyn}
{\em As we will see, the same proof works for both the spectral gap and the entropy constant. On the other hand it does not apply to the log-Sobolev or modified log-Sobolev constant obtained by replacing $\nu\left[\ent_A f\right]$ with $\nu\left[\var_A \sqrt f\right]$ and $\nu\left[\cov_A (f,\log f)\right]$ respectively in \eqref{varentsy}, where $\cov_A (f,g)=\nu_A[(f-\nu_A f)g]$. In fact, the independence on the number of particles does not hold in these cases in general, as can be seen e.g.\ in the simple case $\a_A=\ind_{A=V}$. In any case, convexity implies the inequality 
\begin{align}\label{conve}
\nu\left[\cov_A (f,\log f)\right]\geq \nu\left[\ent_A  f\right]
\end{align} 
for all $A$ and $f$ and thus, by Theorem \ref{th:asyn}, $\k[\a]=\k[\a,1]$ is a lower bound on the relative entropy decay of the process for all $N$. The latter, in turn, can be used to obtain new mixing time bounds for the Markov chain with generator \eqref{geninda}. In particular, using Pinsker's inequality, see e.g.\ \cite{DS}, it follows that 
the mixing time $\tmix[\a,N]$ of the process  defined by \eqref{geninda} satisfies
\begin{align}\label{pinskern}
\tmix[\a,N] \leq C\,\k[\a]^{-1}\left(\log N + \log\log|V|\right),
\end{align}
for some universal constant $C$. }
\end{remark}
\begin{remark}\label{rem:asyn2}
{\em By projection onto symmetric functions, the same independence on the number of particles holds for the spectral gap  of the unlabeled version of this process, namely when we keep track only of the occupation number of each vertex. On the other hand, by projection $\k[\a]$ is only a lower bound on the entropy constant of the unlabeled process, which could be higher. The special case when particles are unlabeled and $\a_A=0$ unless $|A|=2$ is sometimes referred to as the binomial splitting model. The latter has been recently studied in \cite{QS}, where the independence on the number of particles for the spectral gap was obtained  by a different argument.   As discussed in \cite{QS}, by duality,  controlling the convergence to equilibrium for this model allows one to control the approach to stationarity for the averaging processes introduced  in \cite{AL}.}
\end{remark}

\subsection{Block shuffles and permutations}
Next, we discuss our results for permutations. Here we have $n$ labeled particles over $n$ vertices with no two particles occupying the same vertex. Let $V$ be a vertex set with $|V|=n$ and call $\mu$ the uniform distribution over the symmetric group $\cS_n$ of the $n!$ permutations of $V$. A permutation $\si\in \cS_n$ is viewed as a vector $\si=(\si_x)_{x\in V}$ and $\si_x=i$ indicates that the particle with label $i$ is at vertex $x$.  We also use the notation $\xi_i$ to indicate the position of the particle with label $i$, that is $\xi=\si^{-1}$. Given a collection of nonnegative weights $\a=\{\a_A,\,A\subset V\}$,  we define the 
{\em $\a-$shuffle process} as the Markov chain described as follows. Attach to each set $A\subset V$ independent Poisson clocks with rate $\a_A$, and when block $A$ rings all particles in $A$ are reshuffled according to a uniform permutation of the labels currently occupying the set $A$. 
 Formally, this is the continuous time Markov chain with state space $\cS_n$ and with infinitesimal generator 
\begin{align}\label{alphashuffle}
\cG_\a f=\sum_{A\subset V}\a_A(\mu_{A} f  - f)\,,
\end{align} 
where $f:\cS_n\mapsto\bbR$ and we write  $\mu_{A}f= \mu[f|\si_z,\,z\notin A]$ for the conditional expectation of $f$ w.r.t.\ $\mu$ given the labels at all vertices $z\notin A$. The graph version, that is when $\a_A=0$ unless $|A|=2$, is known as the {\em interchange process}. 
The spectral gap of the $\a-$shuffle process, denoted $\l[\a,\cS_n]$ is the largest $\l\geq 0$ such that
for all $f:\cS_n\mapsto\bbR$, 
\begin{align}\label{vargapsn}
\l\,\var f \leq \sum_{A\subset V}\a_A\,\mu\left[\var_A f\right],
\end{align}
where $\var_A f = \mu_A [(f-\mu_A f)^2]$ and $\var f=\var_V f$ is the global variance. We remark that if we restrict to functions of $1$ particle only in \eqref{vargapsn}, then the spectral gap coincides with $\l(G)$ defined in \eqref{vargap} with the choice of rates \eqref{c_alpha}. This follows by observing that for such a function one has $f(\si)=g(\xi_1)$ for some $g:V\mapsto\bbR$ and   $$\var_A f = \ind_{\xi_1\in A} \,\smallvar_A g\,,\quad \mu\left[\var_A f\right]=\frac{|A|}n\,\smallvar_A f,$$ and by reasoning as in Remark \ref{rem:1particle}. In particular, it is always the case that $\l[\a,\cS_n]\leq \l(G)$ if $G$ is defined by \eqref{c_alpha}. For the interchange process, that is whenever $\a_A=0$ unless $|A|=2$, it is known  \cite{CLR} that $\l[\a,\cS_n]= \l(G)$. 
The second author of the present paper conjectured that this should be the case for arbitrary $\a$, see \cite{CesiOctopus,aldous2020life}:
 \begin{conjecture}\label{alphaconj}
For arbitrary weights $\a$,  $\l[\a,\cS_n]=\l(G)$, where $G$ is the weighted graph defined by \eqref{c_alpha}. 
\end{conjecture}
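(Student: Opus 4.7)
\emph{Proof plan.} The upper bound $\l[\a,\cS_n] \leq \l(G)$ follows by restricting \eqref{vargapsn} to one-particle functions as observed in the excerpt, so only the lower bound $\l[\a,\cS_n]\geq \l(G)$ requires proof. A first attempt is the pointwise Dirichlet form comparison
\begin{align}\label{plan-strategy}
\sum_A \a_A\,\mu\left[\var_A f\right] \;\geq\; \sum_{\{x,y\}} c_{xy}\,\mu\left[\var_{xy} f\right],\qquad c_{xy}\text{ as in }\eqref{c_alpha},
\end{align}
which combined with the Aldous conjecture proved in \cite{CLR} for the weighted interchange process on $G$ would immediately conclude. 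However \eqref{plan-strategy} is false in general: for $\a = \ind_{\cdot = A}$ with $m = |A| \geq 4$, evaluating both sides on the sign character of $\cS_m$ gives $1$ on the left but $(m-1)/2$ on the right. Any valid proof of Conjecture \ref{alphaconj} must therefore bypass pointwise Dirichlet domination.

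A structural reformulation that does not break on the sign character is the following. The left $\cS_n$-action (relabelling particles) commutes with every $\cG_\a$, so $L^2(\cS_n)$ decomposes into isotypic components $H_\rho$ indexed by irreducibles $\rho$ of $\cS_n$, each $\cG_\a$-stable. One-particle functions span exactly $H_{\text{triv}}\oplus H_{\text{std}}$, and the restriction of $\cG_\a$ to $H_{\text{std}}$ reproduces the Laplacian of $G$; hence the smallest eigenvalue of $-\cG_\a$ on $H_{\text{std}}$ is precisely $\l(G)$. The conjecture is therefore equivalent to: \emph{for every nontrivial non-standard irreducible $\rho$, the smallest eigenvalue of $-\cG_\a$ on $H_\rho$ is at least $\l(G)$}. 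In the mean field case the right $\cS_n$-action is also a symmetry, $\cG_\a\big|_{H_\rho}$ is scalar by Schur's lemma, and the conjecture reduces to an explicit scalar computation (carried out in this paper). For general $\a$ the right symmetry is broken and $\cG_\a\big|_{H_\rho}$ is a genuine non-scalar matrix, so a new ingredient is needed.

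To control these matrices I would attempt an induction on $n$ in the spirit of \cite{CLR}: fix a distinguished vertex $x_0$, split $\a = \a^\circ + \a^\star$ into blocks avoiding/containing $x_0$, use the induction hypothesis for $\a^\circ$ acting on the fibers of the label at $x_0$, and handle the $\a^\star$ contribution via a generalized octopus-type inequality for $\sum_{A\ni x_0}\a_A(I - \mu_A)$ restricted to each isotypic component. The main obstacle is precisely this generalized octopus inequality for blocks of size $|A|\geq 3$. In the case $|A|=2$ of \cite{CLR} it reduces to a finite but delicate operator-positivity check because $\mu_{xy} = \tfrac12(I + T_{xy})$ has rank one on each coset; for $|A|\geq 3$ the projection $\mu_A$ onto the $\cS_A$-invariant subspace has much higher rank and its interaction with the remaining Dirichlet contributions is considerably more intricate. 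Devising the correct pointwise operator inequality, compatible with the weights \eqref{c_alpha}, is in my view the central missing ingredient required to settle Conjecture \ref{alphaconj} in full generality.
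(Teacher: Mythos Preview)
The statement is a \emph{conjecture}, and the paper does not prove it in general; it only establishes the mean field special case (Theorem \ref{th:meanfieldgap}). Your proposal is likewise not a proof: you correctly note that the natural Dirichlet-form comparison \eqref{plan-strategy} fails for $|A|\ge 4$, you reformulate the problem representation-theoretically, and you identify a generalized octopus inequality for blocks of size $\ge 3$ as the missing ingredient. That assessment is accurate and consistent with the paper's own stance (``We are not aware of significant results in this direction, except for cases that can be easily reduced to the case of graphs'').

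One point worth flagging is that the paper's proof in the mean field case does \emph{not} go through the isotypic decomposition or any octopus-type inequality. Instead it uses a martingale recursion: decompose $\var f$ along $\si_x$, apply the inductive hypothesis on $\cS_{n-1}$ to the conditional variance, and control the marginal term $\sum_x \var\,\mu(f|\si_x)$ via the elementary inequality $\sum_x \var\,\mu(f|\si_x)\le \tfrac{n}{n-1}\var f$ from \cite{Shonan}. This closes the recursion exactly at the claimed value, and uniqueness follows because equality in the marginal bound forces $\mu(f|\si_x)\equiv 0$. So while your representation-theoretic framework is the natural one for the general conjecture, the special case actually proved here is handled by a more pedestrian but sharper argument that exploits the full $\cS_n$ symmetry of the rates rather than Schur's lemma directly.
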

We are not aware of significant results in this direction, except for cases that can be easily reduced to the  case of graphs that was settled in \cite{CLR}. Below we show that the conjectured identity holds in the very special mean field case. Recall that $\a^\ell_A = \ind_{|A|=\ell}$. 
\begin{theorem}[Spectral gap for permutations, mean field case]\label{th:meanfieldgap}
Suppose $\a=\sum_{\ell=2}^n w_\ell\,\a^\ell$ for some nonnegative vector $w=(w_2,\dots,w_n)$. Then 
\begin{align}\label{meanfieldgap1}
\l[\a,\cS_n]=\sum_{\ell=2}^n \frac{nw_\ell}{\ell}\binom{n-2}{\ell-2},
\end{align}
and equality in \eqref{vargapsn} is uniquely achieved at mean zero functions of a single particle. 
In particular,  Conjecture \ref{alphaconj} holds whenever $\a_A$ is a function of the cardinality $|A|$ only. 
\end{theorem}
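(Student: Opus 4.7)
The plan is to establish the claim by matching an upper bound coming from single-particle test functions with a lower bound obtained by diagonalising $\cG_\alpha$ via representation theory. For the upper bound, take a mean-zero $g:V\to\bbR$ and use $f(\sigma) = g(\xi_1)$ as a test function in \eqref{vargapsn}. As recalled in the paragraph preceding the theorem, one has $\var_A f = \ind_{\xi_1\in A}\,\smallvar_A g$ with $\smallvar_A g = |A|^{-1}\sum_{x\in A}(g(x)-\bar g_A)^2$, whence $\mu[\var_A f] = (|A|/n)\,\smallvar_A g$. A direct counting (writing $\smallvar_A g = (2|A|^2)^{-1}\sum_{x,y\in A}(g(x)-g(y))^2$ and swapping the sums) gives the combinatorial identity $\sum_{|A|=\ell}\smallvar_A g = (n^2/\ell^2)\binom{n-2}{\ell-2}\var_\mu g$. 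Combining and summing over $\ell$ yields
\[
\sum_A \alpha_A\,\mu[\var_A f] = \Bigg(\sum_{\ell=2}^n \frac{nw_\ell}{\ell}\binom{n-2}{\ell-2}\Bigg)\var_\nu f,
\]
which is the upper bound.

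For the lower bound the crucial input is that in the mean-field regime the rates $\alpha_A$ are invariant under all permutations of $V$, so $\cG_\alpha$ commutes with \emph{both} the left translation $\sigma\mapsto\tau\sigma$ (relabelling particles) and the right translation $\sigma\mapsto \sigma\tau^{-1}$ (permuting positions) of $\cS_n$ on itself. By Peter--Weyl, $L^2(\cS_n,\mu) = \bigoplus_{\lambda\vdash n} V_\lambda\otimes V_\lambda^*$, and Schur's lemma forces $\cG_\alpha$ to act as a scalar $\gamma_\lambda$ on each isotypic block. The spectral gap is $\min_{\lambda\ne(n)}(-\gamma_\lambda)$. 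Reducing to a single block-size $\ell$ by linearity and using that each $\mu_A$ is an orthogonal projection onto $\cS_A$-right-invariant functions, a trace computation yields
\[
-\gamma_\lambda^{(\ell)} = \binom{n}{\ell}\bigg(1 - \frac{\dim V_\lambda^{\cS_\ell}}{\dim V_\lambda}\bigg),
\]
where $V_\lambda^{\cS_\ell}$ is the subspace of $V_\lambda$ fixed by a canonical copy of $\cS_\ell \hookrightarrow \cS_n$. Pieri's rule applied to $\lambda = (n-1,1)$ lists exactly two horizontal strips of size $\ell$, leaving $\nu = (n-\ell-1,1)$ and $\nu = (n-\ell)$, so $\dim V_{(n-1,1)}^{\cS_\ell} = (n-\ell-1) + 1 = n-\ell$, and therefore $-\gamma_{(n-1,1)}^{(\ell)} = \binom{n}{\ell}\frac{\ell-1}{n-1} = \frac{n}{\ell}\binom{n-2}{\ell-2}$. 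Since the $(n-1,1)$-isotypic coincides with the span of mean-zero single-particle functions (both have dimension $(n-1)^2$), this eigenvalue is precisely attained on the upper-bound test functions.

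The main obstacle is showing that the gap is not lowered by any other irreducible, i.e.\ that
\[
\frac{\dim V_\lambda^{\cS_\ell}}{\dim V_\lambda} \le \frac{n-\ell}{n-1} \qquad \forall\, \lambda\vdash n,\ \lambda\ne (n),(n-1,1).
\]
My approach here would be to use the branching formula $\dim V_\lambda^{\cS_\ell} = \sum_\nu \dim V_\nu$, the sum running over $\nu\vdash n-\ell$ with $\lambda/\nu$ a horizontal strip, and prove the inequality by induction on $n$ or on the dominance order of $\lambda$, verifying the extremal families (two-row shapes $(n-k,k)$ and hooks $(n-k,1^k)$) explicitly via the hook length formula. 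Once this inequality is in hand, the lower bound matches the upper bound after summing over $\ell$ with weights $w_\ell$, the eigenspace of the gap is precisely the $(n-1,1)$-isotypic, and uniqueness follows. Finally, the single-particle chain \eqref{Lalpha} has spectral gap $\sum_\ell (nw_\ell/\ell)\binom{n-2}{\ell-2}$ on the mean-zero subspace (by the same one-line computation, as a special case of $\lambda[\alpha,\cS_n]$ with $N=1$), which verifies Conjecture~\ref{alphaconj} in this mean-field setting.
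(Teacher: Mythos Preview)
Your approach is genuinely different from the paper's and the framework is sound: in the mean-field case $\cG_\alpha$ does commute with both regular actions, so Schur forces it to be scalar on each isotypic block; your trace computation $-\gamma_\lambda^{(\ell)}=\binom{n}{\ell}\bigl(1-\dim V_\lambda^{\cS_\ell}/\dim V_\lambda\bigr)$ is correct, as is the evaluation at $\lambda=(n-1,1)$ via Pieri. The upper bound via one-particle test functions is identical to the paper's.

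However, there is a real gap. The inequality
\[
\frac{\dim V_\lambda^{\cS_\ell}}{\dim V_\lambda}\le \frac{n-\ell}{n-1}\qquad(\lambda\neq (n),(n-1,1))
\]
is, by your own computation, \emph{equivalent} to the statement $\Lambda_\ell(n)=\frac{n}{\ell}\binom{n-2}{\ell-2}$ that you are trying to prove. You acknowledge this is ``the main obstacle'' and offer only a plan (``induction on $n$ or on the dominance order \ldots\ verifying the extremal families''), with no argument. This is not a minor omission: the branching identity $\dim V_\lambda^{\cS_\ell}=\sum_{\nu:\,\lambda/\nu\text{ h.s.}} f^\nu$ does not obviously reduce under removing a box from $\lambda$, and there is no reason to expect hooks and two-row shapes to be extremal, so the sketched induction is not convincing as stated. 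For $\ell=2$ the inequality is the Diaconis--Shahshahani gap, and for $\ell\in\{n-1,n\}$ it is trivial, but the intermediate range requires real work. The strict inequality needed for uniqueness is likewise unaddressed.

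By contrast, the paper never touches representation theory. It fixes $\ell$, writes $\var f=\mu[\var(f\mid\sigma_x)]+\var[\mu(f\mid\sigma_x)]$, applies the inductive hypothesis $\Lambda_\ell(n-1)$ to the first piece, and controls the second via the bound $\sum_x\var[\mu(f\mid\sigma_x)]\le \frac{n}{n-1}\var f$ (cited from \cite{Shonan}). This closes the recursion exactly and gives uniqueness because that variance bound is strict off the single-particle subspace. The paper's route is more elementary and complete; yours would be more structural if the dimension inequality were established, but as written the core of the argument is missing.
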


Our next result concerns the entropy constant for permutations, denoted $\k[\a,\cS_n]$. This is defined as the largest $\k\geq 0$ such that
for all $f:\cS_n\mapsto\bbR_+$,
\begin{align}\label{varentsn}
\k\,\ent f \leq \sum_{A\subset V}\a_A\,\mu\left[\ent_A f\right],
\end{align}
where $\ent_A f = \mu_A [f\log(f/\mu_A f)]$ and $\ent f=\ent_V f$ is the global entropy.  
As above, one can check that,  if we restrict to functions of $1$ particle only in \eqref{varentsn}, then the entropy constant coincides with $\k[\a]$ defined in \eqref{kentalpha}.  

Inspired by the earlier works \cite{MarStFlour,Cesi,dai2002entropy} using entropy factorization, inequalities of the form \eqref{varentsn} were recently introduced  in \cite{CaputoParisi}  in the setting of Gibbs measures describing spin systems, under the name of
{\em block factorization} of the relative entropy. These are generalizations of the classical Shearer inequality for Shannon entropy, and  play an important role in recent remarkable developments in the analysis of the convergence to equilibrium for the Glauber dynamics and related Markov chains  \cite{BCPSV,CLV20,BCCPSV}.   In particular, for  spin systems the entropy constant with mean field weights $\a=\a^\ell$ was estimated under various weak dependency assumptions in \cite{CLV20,BCCPSV}. We refer to the recent papers \cite{cryan2021modified,hermon2019modified,Anari2021entropic} for further important developments in the study of entropy inequalities under log-concavity assumptions.  
 However, we are not aware of any work concerned with the entropy constant $\k[\a,\cS_n]$ defined above. One motivation for studying this constants is the fact that a lower bound on $\k[\a,\cS_n]$ provides an upper bound on the mixing time $\tmix[\a,\cS_n]$ of the $\a$-shuffle process. Indeed, using \eqref{conve}, which continues to hold for the uniform measure on permutations, Pinsker's inequality implies 
\begin{align}\label{pinsker}
\tmix[\a,\cS_n] \leq C\,\k[\a,\cS_n]^{-1}\log n
\end{align}
for some universal constant $C$, see e.g. \cite{DS} for the well known argument. The mixing time of the interchange process is an extensively studied problem, with several interesting open questions, see \cite{Jonasson,Oliveira}. We refer to \cite{AlonKozma,HermonSalez} for recent progress in the use of functional inequalities to bound the mixing time of the interchange process for certain sequences of graphs. Furthermore, the mixing time of hypergraph versions has been recently investigated in \cite{connor2019mixing,hermon2021direct}.  

Our main result concerning the entropy constant for permutations is a computation of its value in all mean field cases.
\begin{theorem}[Entropy constant for permutations, mean field case]\label{th:meanfieldent}
Suppose $\a=\sum_{\ell=2}^n w_\ell\,\a^\ell$ for some nonnegative vector $w=(w_2,\dots,w_n)$. Then 
\begin{align}\label{th:meanfield2}
\k[\a,\cS_n]= \sum_\ell w_\ell\, \frac{\binom{n}\ell \log \ell !}{\log n!},
\end{align} 
and equality in \eqref{varentsn} is uniquely achieved at multiples of a Dirac mass. 
\end{theorem}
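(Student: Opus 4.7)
\emph{Plan.} Testing \eqref{varentsn} with the Dirac choice $f = n!\,\ind_{\si = \si_0}$ will yield the upper bound at once: one checks directly that $\mu(f) = 1$, $\ent f = \log n!$, and for every $A\subset V$,
\[
\mu_A f \;=\; \frac{n!}{|A|!}\,\ind_{\{\si_z = (\si_0)_z\,\forall z\notin A\}},\qquad \mu[\ent_A f] \;=\; \log|A|!,
\]
so that the ratio $\sum_A \a_A\,\mu[\ent_A f]/\ent f$ equals exactly $\sum_\ell w_\ell\binom{n}{\ell}\log\ell!/\log n!$. By linearity of the Dirichlet form in the weights $(w_\ell)$, the matching lower bound reduces to showing, for each single block size $\ell\in\{2,\dots,n\}$,
\begin{equation*}
S_\ell^{(n)}(f) \;:=\; \sum_{|A|=\ell}\mu[\ent_A f] \;\geq\; \binom{n}{\ell}\,\frac{\log\ell!}{\log n!}\,\ent f.
\end{equation*}

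My plan is to prove this by induction on $n$, reducing everything to a single critical case. For the inductive step I fix a site $z\in V$ and condition on $\si_z = k$: the resulting conditional measure is uniform over bijections $V\setminus\{z\}\to[n]\setminus\{k\}$ and is canonically isomorphic to the uniform measure on $\cS_{n-1}$. Applying the inductive hypothesis at size $n-1$ and block size $\ell$ to $f|_{\si_z=k}$, averaging over $k\in[n]$, and then summing over $z\in V$ (noting that every set $A$ of size $\ell$ is counted exactly $n-\ell$ times on the left) produces the key recursion
\begin{equation}\label{eq:plan-rec}
(n-\ell)\,S_\ell^{(n)}(f) \;\geq\; \frac{\binom{n-1}{\ell}\log\ell!}{\log(n-1)!}\,S_{n-1}^{(n)}(f).
\end{equation}
Using the identity $n\binom{n-1}{\ell}=(n-\ell)\binom{n}{\ell}$, \eqref{eq:plan-rec} propagates the bound from the critical block size $\ell=n-1$ to every $\ell\leq n-1$, while the case $\ell=n$ is immediate since $\mu[\ent_V f] = \ent f$.

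The main obstacle is therefore the critical case $\ell = n-1$, in which \eqref{eq:plan-rec} is vacuous. Applying the chain rule $\ent f = \mu[\ent_{V\setminus\{z\}}f] + \ent(\mu_{V\setminus\{z\}}f)$ for each $z$, and noting that $\mu_{V\setminus\{z\}}f$ depends only on $\si_z$ with marginal density $g_z$, this case is equivalent to the sharp subadditivity
\begin{equation}\label{eq:plan-subadd}
\sum_{z\in V}\ent g_z \;\leq\; \frac{n\log n}{\log n!}\,\ent f,
\end{equation}
which is strictly stronger than the classical Han/Shearer-type subadditivity and genuinely exploits the permutation constraint. My approach to \eqref{eq:plan-subadd} will be a Radhakrishnan-style random-ordering argument: expand $H(X) = \log n! - \ent f$ via the entropy chain rule along a uniformly random ordering $\pi$ of $V$, obtaining $H(X) = \sum_k \E_\pi H(X_{\pi(k)}\mid X_{\pi(<k)})$, and then symmetrize to bound the expected $k$-th conditional entropy by $(\log(n-k+1)/\log n)\,\bar H$, where $\bar H = \frac{1}{n}\sum_z H(X_z)$. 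Summing via $\sum_{k=1}^n\log(n-k+1) = \log n!$ then produces the sharp constant $\log n!/(n\log n)$. This step is the main technical hurdle, and it is precisely this subadditivity that underlies the sharp permanent bound advertised in the abstract as an application. Finally, uniqueness of Dirac extremizers will follow from tracking equality cases in \eqref{eq:plan-rec} and in the proof of \eqref{eq:plan-subadd}: each equality forces $f$ to concentrate further, ultimately leaving only multiples of a Dirac mass.
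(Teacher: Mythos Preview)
Your upper bound and the recursion \eqref{eq:plan-rec} are both correct, and the reduction of every $\ell\le n-1$ to the critical case $\ell=n-1$ via \eqref{eq:plan-rec} is a clean repackaging of the inductive framework the paper also uses. The genuine gap is in your proof of the subadditivity \eqref{eq:plan-subadd}.

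The step you label ``symmetrize to bound the expected $k$-th conditional entropy by $(\log(n-k+1)/\log n)\bar H$'' is not a consequence of any Radhakrishnan-type averaging. Radhakrishnan's argument for Bregman's theorem hinges on the combinatorial fact that, for a $0/1$ matrix, $X_{\pi(k)}$ is confined to the \emph{support} of row $\pi(k)$, and one bounds the conditional entropy by $\log$ of the number of surviving support elements; averaging over $\pi$ then yields $\frac{1}{r_i}\log r_i!$. For a general nonnegative density $f$ there is no support structure to exploit, and your per-step inequality is a new, nontrivial claim. Even its simplest instance $k=2$, namely
\[
\frac1{n(n-1)}\sum_{z\neq w}H(X_w\mid X_z)\;\le\;\frac{\log(n-1)}{\log n}\,\bar H,
\]
does not follow from symmetry considerations; unwinding it one sees it is a sharp lower bound on the total pairwise mutual information of the marginals of a random permutation, and there is no elementary proof of this on offer. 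In fact \eqref{eq:plan-subadd} is \emph{exactly} the inequality the paper shows (Lemma~\ref{lem:equiv} and the argument around \eqref{perma3}) to be equivalent to Samorodnitsky's conjecture on permanents of nonnegative matrices, which had resisted the Radhakrishnan approach and was open prior to this work.

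What the paper actually does for \eqref{eq:plan-subadd} is quite different: it bounds $\ent[\mu(f\mid\si_x)]$ via the one-particle entropy constant of Theorem~\ref{th:meanfield1} (whose proof rests on the delicate calculus Lemma~\ref{lem:tec2}), then uses a convexity transfer to rewrite the resulting one-particle local entropies as permutation-block quantities $\ent_A\mu_A(f\mid\si_x)$, and finally closes the loop by applying the inductive hypothesis at the smaller block size $|A|=\ell$ through \eqref{keller96}. In other words, the subadditivity at level $n$ is obtained from the full family of inequalities at level $\ell<n$ together with the sharp one-particle input, not from a self-contained random-ordering computation. Your outline is missing precisely this mechanism.
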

\begin{remark}\label{rem:entperm}
{\em We observe that the phenomenon of independence on the number of particles for the spectral gap in  Conjecture \ref{alphaconj}  cannot hold for the entropy constant. Indeed, if e.g.\ $\a=\a^\ell$ we know from Theorem  \ref{th:meanfieldent} and Theorem  \ref{th:meanfield1} that
\begin{align}\label{ent1meanfield2}
\k[\a^\ell]=\frac{\binom{n-1}{\ell-1}\log \ell}{\log n} > \frac{\binom{n}\ell \log \ell !}{\log n!}=\k[\a^\ell,\cS_n].
\end{align}
For instance, when $\ell=2$, the left hand side above is asymptotically twice as large as the left hand side as $n\to\infty$. On the other hand the ratio approaches $1$ for $n\to\infty$ and then $\ell\to\infty$. }
\end{remark}

\begin{remark}\label{rem:entperm2}
{\em 
We prove Theorem \ref{th:meanfieldent} by a suitable version of the martingale method already employed in the estimation of the log-Sobolev and modified log-Sobolev constants for the interchange process on the complete graph \cite{LeeYau,GaoQuastel,Goel}. It is remarkable that in those cases, which correspond to the $\ell=2$ case of the above theorem, the method does not allow one to compute exactly the constants but only to give an estimate that is tight up to a constant factor. For the entropy constant instead one can provide an explicit value and a characterization of the extremal functions associated to it. As we will see in Corollary \ref{cor:permanent} below, the explicit knowledge of the entropy constant can be quite  useful.

}
\end{remark}

A further result concerns the case of unlabeled particles, namely when there are $r$ unlabeled particles undergoing the $\a$-shuffle dynamics, for some $r\in\{1,\dots,n-1\}$. This amounts to restricting the action of the generator \eqref{alphashuffle}
to functions  of the form $f(\si)=g(\xi_1,\dots,\xi_r)$ for some symmetric function $g:V^r\mapsto\bbR$. The stationary distribution becomes the uniform measure over all $\binom{n}{r}$ configurations.  In the binary mean field case $\a_A=\ind_{|A|=2}$, this is known as the {\em Bernoulli-Laplace model} \cite{diaconis1987time}.  The log-Sobolev constant for this process was estimated in \cite{LeeYau}, while its modified log-Sobolev constant was estimated in \cite{GaoQuastel,Goel,ErbarMaasTetali}.  As in the labeled case, these estimates are tight up to constant factors but the exact value of the constants remains unknown. Here we are able to compute the corresponding entropy constant, denoted $\k(n,r)$, which is obtained by restricting \eqref{varentsn}
to the above described class of functions in the case $\a_A=\ind_{|A|=2}$. 
\begin{theorem}[Entropy constant for Bernoulli-Laplace]\label{th:BL}
For all integers $n\geq 2$ and all $1\leq r\leq n-1$, the entropy constant $\k(n,r)$
of  the Bernoulli-Laplace model with $r$ particles satisfies
\begin{align}\label{meanfieldent3}
\k(n,r)=\frac{r(n-r)\log(2)}{\log\binom{n}r}.
\end{align} 
\end{theorem}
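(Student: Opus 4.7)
The plan is to prove the matching upper and lower bounds on $\k(n,r)$ separately.

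For the upper bound $\k(n,r) \le r(n-r)\log 2/\log\binom{n}{r}$, test the entropy inequality (restricted to symmetric functions with $\a_A = \ind_{|A|=2}$) against the normalized Dirac mass $f = \binom{n}{r}\ind_{\eta_0}$ at an arbitrary fixed $\eta_0 \in \binom{[n]}{r}$. Direct computation gives $\ent f = \log\binom{n}{r}$ on the left; on the right only pairs $\{x,y\}$ with $|\{x,y\}\cap\eta_0|=1$ contribute nontrivially, of which there are exactly $r(n-r)$, and each yields a local Bernoulli relative entropy equal to $\log 2$. This produces the extremal ratio $r(n-r)\log 2/\log\binom{n}{r}$.

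For the matching lower bound, the natural approach is induction on $n$ via a martingale decomposition along a pivot site $x_0 \in V$, with trivial base case $n=2$. Writing $f_0, f_1$ for the restrictions of $f$ to $\{\eta : x_0 \notin \eta\}$ and $\{\eta : x_0 \in \eta\}$, viewed as functions on $\binom{V\setminus\{x_0\}}{r}$ and $\binom{V\setminus\{x_0\}}{r-1}$, and setting $p=(n-r)/n$, $q=r/n$, the chain rule yields $\ent f = p\,\ent_{\mu_0}f_0 + q\,\ent_{\mu_1}f_1 + \ent_\mu F$ with $F$ the two-valued coarse conditional expectation, while the Dirichlet form decomposes as
\[
\sum_{\{x,y\}\subset V}\mu[\ent_{xy}f] = p\,E_{n-1,r}(f_0) + q\,E_{n-1,r-1}(f_1) + \sum_{y\ne x_0}\mu[\ent_{x_0 y}f],
\]
where $E_{m,k}$ denotes the Bernoulli--Laplace Dirichlet sum on $\binom{[m]}{k}$.

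The main obstacle is that the target constant $c(n,r) := r(n-r)\log 2/\log\binom{n}{r}$ is \emph{not} monotone in $n$ (for instance $c(2,1)=1 < c(3,1)=2\log 2/\log 3$), so a naive termwise induction does not close: the inductive bounds on $f_0, f_1$ produce the smaller constants $c(n-1,r)$ and $c(n-1,r-1)$, and the deficit, together with the Bernoulli coarsening $\ent_\mu F$, must be absorbed by the pivot-pair contribution $\sum_{y\ne x_0}\mu[\ent_{x_0 y}f]$. The key step, which I expect to be the main analytic hurdle, is to symmetrise the argument by averaging the decomposition over all pivots $x_0 \in V$ and then combine the averaged inductive estimate with a Shearer-type entropic inequality bounding the aggregate Bernoulli coarsening entropies $\sum_{x_0} \ent_\mu F^{(x_0)}$ by the full Dirichlet form. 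It is at this point that the precise constants $\log 2$ and $\log\binom{n}{r}$ (the latter being exactly the maximal entropy on the state space, realised by the Dirac extremizer) must enter, closing the recursion at the sharp value $c(n,r)$.
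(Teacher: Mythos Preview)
Your overall strategy coincides with the paper's: Dirac-mass upper bound, and lower bound by induction on $n$ using the chain rule $\ent f=\mu[\ent(f|\eta_x)]+\ent[\mu(f|\eta_x)]$ at a pivot $x$, followed by averaging over $x$. The upper bound computation is correct.

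Where your plan remains incomplete is precisely the step you flag as ``the main analytic hurdle,'' and the label ``Shearer-type'' points in the wrong direction. The paper controls the averaged coarsening term $\tfrac1n\sum_x\ent[\mu(f|\eta_x)]=\tfrac1n\sum_x\psi_\rho(\mu(f|\eta_x{=}1),\mu(f|\eta_x{=}0))$, $\rho=r/n$, via two concrete ingredients you have not identified:
\begin{itemize}
\item[(i)] a \emph{swap representation} $\mu(f|\eta_x{=}0)=\tfrac1{n(1-\rho)}\sum_y\mu\bigl(f^{xy}(1-\eta_y)\,\big|\,\eta_x{=}1\bigr)$, after which convexity of $(a,b)\mapsto\psi_\rho(a,b)$ reduces the coarsening to expectations of $\psi_\rho(f,f^{xy})$;
\item[(ii)] after summing over $x$ and symmetrising, a \emph{pointwise} one-dimensional comparison (Corollary~\ref{cor:tec1}, from Lemma~\ref{lem:tec1}): for all $a,b\ge0$,
\[
\bar\psi_\rho(a,b):=\tfrac12\bigl(\psi_\rho(a,b)+\psi_{1-\rho}(a,b)\bigr)\;\le\;\frac{h(\rho)}{\log 2}\,\psi(a,b),
\]
which converts the biased local entropies into the unbiased ones $\psi(f,f^{xy})=\ent_{xy}f$ appearing in the Dirichlet form.
\end{itemize}
This is not a Shearer-type subadditivity; it is a calculus fact about Bernoulli relative entropies at different biases, and it is where the real work lies.

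A second point: in the paper's bookkeeping the Dirichlet form is \emph{not} split along the pivot as you do. Both the conditional-entropy piece and the coarsening piece are bounded separately by multiples of the \emph{full} Dirichlet form $D(f)=\tfrac12\sum_{y,z}\mu[\ent_{yz}f]$, yielding after averaging
\[
\frac{1}{\k(n,r)}\;\le\;\frac{r-1}{n\,\k(n-1,r-1)}+\frac{n-r-1}{n\,\k(n-1,r)}+\frac{h(\rho)}{n^2\rho(1-\rho)\log 2}.
\]
The recursion then closes \emph{exactly} at $\bar\k(n,r)=r(n-r)\log 2/\log\binom{n}{r}$ by a binomial identity. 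So the ``deficit'' you worry about---legitimately, since $c(n,r)$ is not monotone---does not need to be absorbed by the pivot-pair contribution alone; it vanishes identically once (i)--(ii) produce the correct coefficient $h(\rho)/\bigl(r(n-r)\log 2\bigr)$ in front of $D(f)$.
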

\begin{remark}\label{rem:entperm3}
{\em 
Clearly, the case $r=1$ coincides with the case $\ell=2$ of Theorem \ref{th:meanfield1}, that is $\k(n,1)=\k[\a^2]=(n-1)\log (2)/\log n $. On the other hand for $r\sim n/2$ one has $\k(n,r)\sim n/4 $. This is in contrast with the spectral gap, which is independent of $r$, and with the modified log-Sobolev constant which is known to be equivalent up to a factor $4$ for all values of $r$ \cite{Goel}. The inequalities \eqref{ineqents} on the other hand show that the log-Sobolev constant is equivalent up to a factor $\log(2)$ to the constant $\k(n,r)$, which implies a slight refinement of the 
estimates in \cite[Theorem 5]{LeeYau}. 
}
\end{remark}
Finally, as an application of our results we mention the following sharp  upper bound on the permanent of a matrix with arbitrary nonnegative entries, which was independently conjectured by the second author, by Carlen, Lieb, Loss \cite{CLL2} and by Samorodnitsky \cite{Samorodnitsky}. 
Let $A=(a_{i,j})$ denote an $n\times n$ matrix, and write 
 \begin{align}\label{perma1}
{\rm perm}(A) = \sum_{\si\in \cS_n}\prod_{i=1}^n a_{i,\si_i},
 \end{align} 
 for the permanent of $A$. For zero-one valued matrices, the well known Bregman-Minc theorem \cite{Bregman} establishes a tight upper bound on the permanent of a matrix with given row sums. Our result below can be seen as an extension of the Bregman-Minc theorem to all matrices with nonnegative entries. 
 
 \begin{corollary}\label{cor:permanent}
 For any $p\geq 1$, for any $n\times n$ nonnegative matrix $A$, 
 \begin{align}\label{perma11}
{\rm perm}(A) \leq \max\left\{1,\frac{n!}{n^{n/p}}\right\} \prod_{i=1}^n \|R_i\|_p,
 \end{align} 
 where $R_i$ denotes the $i-$th row of $A$ and $\|\cdot\|_p$ denotes the $\ell_p$-norm of a vector, and equality is uniquely achieved at either the identity matrix or  the  all\,-\,$1$ matrix (up to permutation of rows and multiplication by a scalar). 
 \end{corollary}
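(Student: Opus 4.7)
The plan is to derive the permanent bound from Theorem~\ref{th:meanfieldent} by translating the problem into a question about a Gibbs density on $\cS_n$ and exploiting the sharp entropy inequality together with its extremizer characterization.

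\textbf{Reduction.} Since both sides of \eqref{perma11} are homogeneous of degree one in each row of $A$, I would first rescale rows so that $\|R_i\|_p = 1$ for every $i$. The bound then reduces to $\mathrm{perm}(A) \leq \max\{1,\,n!/n^{n/p}\}$. The two candidates for the maximum correspond to the two natural extremal matrices: the identity $I$ gives $\mathrm{perm}(I)=1$, while the uniform matrix $J/n^{1/p}$ (all entries equal to $n^{-1/p}$) gives $\mathrm{perm}=n!/n^{n/p}$. These match, respectively, the Dirac-mass and uniform-measure extremizers of the entropy inequality in Theorem~\ref{th:meanfieldent}.

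\textbf{Gibbs setup.} Define $f(\si)=\prod_i a_{i,\si_i}^p$, let $A^{(p)}$ denote the entrywise $p$-th power of $A$, and set $Z=\mathrm{perm}(A^{(p)})$; the row normalization guarantees $Z \leq \prod_i \|R_i^{(p)}\|_1 = 1$. The probability density $\tilde f := n!f/Z$ w.r.t.\ $\mu$ defines a Gibbs measure $\pi$ on $\cS_n$, and with $F(\si)=\prod_i a_{i,\si_i}=f^{1/p}$ one has $\mathrm{perm}(A)=n!\,\mu(F)=n!\,\mu(f^{1/p})$, so the task is to bound $\mu(f^{1/p})$ in terms of $\mu(f)$ and of how concentrated $\pi$ is.

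\textbf{Entropy inequality.} I would apply Theorem~\ref{th:meanfieldent} to $\tilde f$, combined with the Gibbs variational formula $\log\mu(e^g)=\sup_\nu\{\nu(g)-\ent(\nu|\mu)\}$ applied with $g=p^{-1}\log f$. Together these relate $\mu(f^{1/p})$, $\mu(f)$, and $\ent \tilde f$, yielding an estimate of the form $\mathrm{perm}(A)\leq n!^{1-1/p}\,Z^{1/p}\,\exp\{-C(p)\,\ent \tilde f\}$ for a suitable coefficient $C(p)\geq 0$ derived from the sharp value of $\k[\a,\cS_n]$. Using $Z\leq 1$ together with the bracketing $0\leq \ent\tilde f\leq \log n!$ produces the two regimes of the maximum: in the ``uniform'' regime $\pi\approx\mu$ one recovers the $n!/n^{n/p}$ bound from Jensen plus the row-stochasticity of $A^{(p)}$, and in the ``Dirac'' regime $\pi\approx \delta_{\si_0}$ the entropic correction is so strong that $\mathrm{perm}(A)\leq 1$.

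\textbf{Extremizers and the main obstacle.} The main technical obstacle will be making this interpolation precise enough to identify the crossover value $p^\star=n\log n/\log n!$ (at which $n!/n^{n/p}=1$) and to reach the sharp constant $\max\{1,n!/n^{n/p}\}$ in both regimes. This is where the explicit value of $\k[\a^\ell,\cS_n]$ and, crucially, the uniqueness of Dirac extremizers in Theorem~\ref{th:meanfieldent} enter: matching the two regimes to the Dirac and uniform extremizers pins down the identity matrix and the rescaled all-ones matrix as the only equality cases, giving both the inequality and the equality characterization in Corollary~\ref{cor:permanent}.
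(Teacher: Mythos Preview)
Your proposal assembles the right ingredients---homogeneity reduction, a product density on $\cS_n$, Theorem~\ref{th:meanfieldent}, a variational principle---but the decisive step is left as ``an estimate of the form $\mathrm{perm}(A)\le n!^{1-1/p}Z^{1/p}\exp\{-C(p)\,\ent\tilde f\}$ for a suitable coefficient $C(p)$'', and you explicitly flag producing the sharp constant and the crossover at $p^\star$ as the unresolved ``main technical obstacle''. That is exactly the content of the corollary, so the proposal is a plan rather than a proof. In particular it is not clear how applying Theorem~\ref{th:meanfieldent} to $\tilde f$ (an inequality whose right-hand side is a sum of \emph{local} entropies $\mu[\ent_A\tilde f]$, not a function of $\mu(f)$ or $\mu(f^{1/p})$) together with the Gibbs variational formula would yield the displayed estimate; and the crude bracketing $0\le\ent\tilde f\le\log n!$ is too blunt to recover the sharp prefactor.

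The paper avoids this difficulty by not attempting to treat all $p$ at once. It works only at the critical exponent $p_c=n\log n/\log(n!)$ and invokes Theorem~\ref{th:meanfieldent} in one very specific form: the case $\ell=n-1$, which by Lemma~\ref{lem:equiv} is \emph{equivalent} to the entropy subadditivity
\[
\sum_{x}\ent\,\mu(f\mid\si_x)\;\le\;p_c\,\ent f\qquad\text{for all }f:\cS_n\to\bbR_+.
\]
This is precisely the hypothesis needed for the Brascamp--Lieb type duality of Carlen--Cordero, which converts it directly into
\[
\mu\!\Big[\prod_{x}\varphi_x(\si_x)\Big]\;\le\;\prod_{x}\mu\big[\varphi_x(\si_x)^{p_c}\big]^{1/p_c},
\]
i.e.\ $\mathrm{perm}(A)\le\prod_i\|R_i\|_{p_c}$ with no interpolation or case analysis. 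The extension to general $p\ge1$ then follows from a known elementary reduction (Samorodnitsky), and the equality cases from the uniqueness clause in Theorem~\ref{th:meanfieldent} via the converse direction of the same duality. Your setup does not single out $\ell=n-1$ or this subadditivity reformulation, and without that bridge the sharp coefficient does not visibly emerge.
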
 
Corollary \ref{cor:permanent} proves Conjecture 1.1 in \cite{Samorodnitsky}. Note that the values $1$ and $\frac{n!}{n^{n/p}}$ correspond to the case where $A$ is the identity matrix or $A$ is the all-$1$ matrix respectively, and thus \eqref{perma11} is optimal. The bound was shown to hold in \cite{CLL2} when $p\geq 2$. It was also proved in \cite{Samorodnitsky} for $p\in(1,2)$ with an extra factor growing subexponentially with $n$. As shown in \cite[Lemma 1]{Samorodnitsky}, the statement \eqref{perma11} can be reduced to proving the bound 
\begin{align}\label{perma2}
{\rm perm}(A) \leq \prod_{i=1}^n \|R_i\|_{p_c},\qquad p_c= \frac{n\log n}{\log(n!)},
 \end{align} 
where $p_c$ is the value at which the increasing function $p\mapsto \frac{n!}{n^{n/p}}$ takes the value $1$. We will see that  this estimate follows from the case $\ell=n-1$ of our Theorem \ref{th:meanfield2}, which implies a sharp subadditivity estimate for the entropy functional, see Lemma \ref{lem:equiv} below.
The use of entropy to prove upper bounds on the permanent goes back to \cite{schrijver1978short,radhakrishnan1997entropy}. We refer to \cite{gurvits2014bounds,anari2021tight} for further generalizations of the Bregman-Minc theorem.
 
\subsection{Miscellaneous remarks}
We end this introduction with a few remarks on open problems and conjectures.
It would be nice to compute the constant $\k(G)$ defined in \eqref{kent} for various classes of graphs. Besides the complete graph $c_{xy}\equiv 1$ which is contained in Theorem \ref{th:meanfield1}, see Remark \ref{rem:K_n}, the determination of $\k(G)$ remains in general a difficult problem, much as in the case of the (modified) log-Sobolev constants. 
In Proposition \ref{prop:star} below we consider the star graph $c_{xy}=\ind_{x=x_0}$ for some fixed vertex $x_0\in V$, for which we determine the entropy constant asymptotically.  An interesting question is to determine for which graphs one should have $\k(G)=\l(G)$. For the log-Sobolev constant it was shown \cite{ChenSheu} that this is the case for even cycles; see also \cite{chen2008logarithmic}. We believe that the same holds for the entropy constant. Moreover, we believe that the identity $\k(G)=\l(G)$ could extend 
to all $n$-cycles with $n>3$, and to all paths, that is graphs defined by $c_{xy} = \ind_{|x-y|=1}$, $x,y\in\{1,\dots,n\}$, and more generally for rectangular boxes in $\bbZ^d$, $d\geq 2$. As we point out in Remark \ref{rem:kala} below, if a graph $G$ satisfies the identity $\k(G)=\l(G)$ then its (modified) log-Sobolev constants are also determined in terms of the spectral gap. 

A very interesting question in the setting of permutations is to determine for which hypergraphs one has $\k[\a,\cS_n]=\k[\a]$. As we have seen in Remark \ref{rem:entperm} this cannot hold for the mean field case. However, this could be the case for  certain specific graphs such as the path or more generally for rectangular boxes in $\bbZ^d$, $d\geq 2$. As a consequence of \eqref{pinsker}, that would allow one to obtain sharp mixing time bounds for the interchange process on such graphs. Together with the entropic characterization of the cutoff phenomenon recently developed by Salez \cite{Salez}, that may even provide a direct proof of the cutoff phenomenon for these processes, thus generalizing Lacoin's cutoff result for the path \cite{Lacoin}. 
In this respect, it may be of interest to investigate the validity of the entropic analogue of the {\em octopus inequality} established in \cite{CLR} for the interchange process, namely the following inequality for all weighted graphs, for all $x\in V$, and for all functions $f:\cS_n\mapsto\bbR_+$:  
\begin{align}\label{entoctopus}
\frac12\sum_{y,z\neq x}c^{*,x}_{yz}\mu\left[\ent_{yz} f\right]\leq 
\sum_{y} c_{xy}\mu\left[\ent_{xy} f\right],
\end{align} 
where $c^{*,x}_{yz} = c_{xy}c_{xz}/\sum_{w} c_{xw}$. This inequality is known to hold when the entropy functional is replaced by the variance functional, see \cite[Theorem 2.3]{CLR}. Since, as in Lemma \ref{lem:ele}, 
\begin{equation}\label{ineqents1}
2\log(2)\var_{yz} \sqrt f \leq \ent_{yz} f \leq 2\var_{yz} \sqrt f,
\end{equation} for all $f:\cS_n\mapsto\bbR_+$ and all $y,z$, we know that \eqref{entoctopus} holds with at most an extra factor $1/\log(2)$ in the right hand side.  
The validity of \eqref{entoctopus} would have interesting consequences for the analysis of mixing times, see \cite{AlonKozma} for a thorough discussion of this in the case of the variance functional. Moreover, it would encourage the use of a recursive approach to the analysis of the entropic constant based on electric network reductions, in analogy with the main argument in \cite{CLR}. However, as we discuss in Section \ref{sec:1} below, it should be pointed out that, in contrast with the spectral gap and (modified) log-Sobolev constants, the entropic constant $\k(G)$ does not always satisfy the simple monotonicity $\k(G_x)\geq \k(G)$ if $G_x$ denotes the graph $G$ after the electric network reduction at node  $x$.

\bigskip

\noindent
{\bf Acknowledgements:} We would like to thank Justin Salez for  helpful conversations around the topics of this work. A.\ Bristiel would like to thank the Dipartimento di Matematica e Fisica of Roma Tre for its warm welcome in this period of crisis.

\section{One particle problems}\label{sec:1}
We start by recalling the definition of the log-Sobolev and modified log-Sobolev constants and their relations with the entropy constant $\k(G)$ defined in \eqref{kent}. We then prove some preliminary technical estimates that will be used in the proof of our main results. Next, we prove Theorem \ref{th:meanfield1}. The section ends with  the entropy constant for a star graph, and with a discussion of the behavior of entropy constants under electric network reductions. 

\subsection{Entropy constant vs.\ log-Sobolev and modified log-Sobolev}
For a weighted graph $G$ with vertex set $V$ with $|V|=n$, and edge weights $c_{xy}$,  
 the log-Sobolev constant $\b(G)$, and the modified log-Sobolev constant $\varrho(G)$ are defined, respectively as the largest $\b,\varrho$ such that for all $f:V\mapsto\bbR_+$
 \begin{align}\label{kentbr}
\b\,\ent f \leq \frac2{n} \sum_{x,y\in V}c_{xy}\,\smallvar_{xy} \sqrt f,\qquad \varrho\,\ent f \leq \frac2{n} \sum_{x,y\in V}c_{xy}\,\smallcov_{xy} (f,\log f),
\end{align}
where $\ent f = \mu\left[f\log(f/\mu(f))\right]$ denotes the relative entropy of $f$ with respect to the uniform distribution $\mu$ on $V$ and we use the notation
 \begin{gather}\label{entbr2} 
\smallvar_{xy} f = \frac14(f(x)-f(y))^2\,,
\quad \smallcov_{xy} (f,\log f)=\frac14(f(x)-f(y))\log\frac{f(x)}{f(y)}.
\end{gather}
Let $\l(G),\k(G)$ be defined as in \eqref{vargap} and \eqref{kent} respectively. 
\begin{lemma}\label{lem:ele}
For any weighted graph $G$, 
\begin{align}\label{kbr1}
2\log(2)\b(G)\leq \k(G)\leq 2\b(G)\leq \frac12\varrho(G)\leq \l(G).
\end{align}
\end{lemma}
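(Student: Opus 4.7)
The plan is to reduce each of the four inequalities in \eqref{kbr1} to the pointwise two-point chain \eqref{ineqents}, which I would establish by elementary means, and then invoke the variational definitions of $\k(G)$, $\b(G)$, $\varrho(G)$; the last inequality $\varrho(G)/2 \leq \l(G)$ requires instead a separate linearization argument.

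The heart of the proof is the local chain
\begin{align}\label{plan1}
2\log(2)\,\smallvar_{xy}\sqrt f\leq \smallent_{xy} f \leq 2\,\smallvar_{xy}\sqrt f \leq \tfrac{1}{2}\,\smallcov_{xy}(f,\log f),
\end{align}
holding for every pair $x,y$ and every $f\geq 0$. Writing $a=f(x),\,b=f(y)$ and using homogeneity, each inequality becomes a scalar statement in the single parameter $r=a/b\geq 1$ or $p=a/(a+b)\in(0,1)$. The rightmost, after substituting $s=\sqrt a,\,t=\sqrt b$, reduces to $\log r \geq 2(r-1)/(r+1)$ for $r\geq 1$, which follows from the elementary inequality $(r-1)^2\geq 0$ after differentiation. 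The middle estimate $\smallent_{xy} f \leq 2\,\smallvar_{xy}\sqrt f$ is sharp in the linearization limit $a/b\to 1$. The leftmost is the classical sharp two-point log-Sobolev inequality, extremized in the Dirac limit $a/b\to 0$ or $\infty$; I would verify it by expressing both sides as functions of $p$ and checking that their difference is nonnegative on $(0,1)$ via a monotonicity or convexity argument.

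Given \eqref{plan1}, the first three inequalities of \eqref{kbr1} follow immediately by multiplying the pointwise bounds by $c_{xy}/n$, summing over $x,y\in V$, and comparing against the variational characterizations of the constants. For example, combining $\smallent_{xy}f \leq 2\,\smallvar_{xy}\sqrt f$ with the definition $\k(G)\,\ent f \leq \frac{2}{n}\sum_{x,y}c_{xy}\,\smallent_{xy}f$ yields $\tfrac{\k(G)}{2}\ent f \leq \frac{2}{n}\sum_{x,y}c_{xy}\,\smallvar_{xy}\sqrt f$, hence $\b(G)\geq \k(G)/2$; the bounds $\k(G)\geq 2\log(2)\b(G)$ and $4\b(G)\leq \varrho(G)$ are analogous.

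For $\varrho(G)\leq 2\l(G)$, I will linearize: inserting $f=1+\e g$ with $\mu(g)=0$ into the modified log-Sobolev inequality and Taylor-expanding produces $\ent f = \tfrac{\e^2}{2}\var g + O(\e^3)$ and $\smallcov_{xy}(f,\log f) = \e^2\,\smallvar_{xy}(g) + O(\e^3)$; dividing through by $\e^2$ and sending $\e\to 0$ recovers the spectral gap inequality \eqref{vargap} with constant $\varrho(G)/2$, so $\l(G)\geq \varrho(G)/2$. The only step demanding genuine work is the sharp two-point log-Sobolev inequality with constant $2\log 2$; the remainder is bookkeeping.
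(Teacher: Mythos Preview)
Your proposal is correct and follows essentially the same route as the paper: establish the pointwise two-point chain \eqref{ineqents}, sum against the weights to get the first three inequalities of \eqref{kbr1}, and handle $\tfrac12\varrho(G)\leq\l(G)$ by linearization. The only place where the paper supplies more than your sketch is the leftmost bound $2\log(2)\,\smallvar_{xy}\sqrt f\leq \smallent_{xy}f$, which it proves via the parametrization $a=\cos^2\theta$, $b=\sin^2\theta$ and a monotonicity argument for the ratio on $[0,\pi/4]$; your ``monotonicity or convexity'' placeholder would have to be filled in along similar lines.
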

\begin{proof}
We are going to observe that for all edges $xy$, for all $f:V\mapsto\bbR_+$,
 \begin{align}\label{kbr2}
2\log(2)\smallvar_{xy} \sqrt f\leq \smallent_{xy} f \leq 2\,\smallvar_{xy} \sqrt f \leq 
\frac12\,\smallcov_{xy} (f,\log f).
\end{align}
The relations \eqref{kbr1} follow all from \eqref{kbr2} except from the inequality $\frac12\varrho(G)\leq \l(G)$. This is however a well known estimate following from linearization, see e.g.\ \cite{BobTet}. 
The bounds $\smallent_{xy} f \leq 2\,\smallvar_{xy}\sqrt f$ and  $2\,\smallvar_{xy} \sqrt f \leq 
\frac12\,\smallcov_{xy} (f,\log f)$ can be found in \cite[Theorem A.2]{DS} and \cite[Lemma 2.7]{DS} respectively. It remains to prove the first inequality in \eqref{kbr2}. To this end,  we define
\begin{align}\label{def:psi}
\psi(s,t)=\frac12\,s\log(s) + \frac12\,t\log(t) - \frac{s+t}2\log\frac{s+t}2,
\end{align}
and $u(s,t) = \frac14(\sqrt s-\sqrt t)^2$. 
By homogeneity and symmetry it is sufficient to prove that 
	\begin{align}\label{psiuv}
		2\log(2)u(\cos^2(\theta),\sin^2(\theta))\leq \psi(\cos^2(\theta),\sin^2(\theta)),\qquad \theta\in[0,\pi/4].
	\end{align}
	This choice of parametrization makes calculations more straightforward. Consider the function $g(\theta)=\psi(\cos^2(\theta),\sin^2(\theta))/u(\cos^2(\theta),\sin^2(\theta))$, $\theta\in[0,\pi/4]$, with the value at $\theta=\pi/4$ defined by continuity. Since $g(0)=2\log(2)$, it suffices to show that $g'(\theta)\geq 0$ for $\theta\in[0,\pi/4]$. A computation shows that
	\begin{equation*}
		g'(\theta) = 4\,\frac{\cos(\theta)\log(2\cos^2(\theta))+\sin(\theta)\log(2\sin^2(\theta))}{(\cos(\theta)-\sin(\theta))^3}.
	\end{equation*}
		By elementary differentiation one can check that the application $$x\mapsto \sqrt{x}\log(2x)+\sqrt{1-x}\log(2(1-x)),\qquad  x\in [0,1],$$ is convex and has a minimum at $x=1/2$ where it takes the value $0$. So the numerator, $\cos(\theta)\log(2\cos(\theta)^2)+\sin(\theta)\log(2\sin(\theta)^2)$ is non-negative. 	Since  $\cos(\theta)\geq\sin(\theta)$ for $\theta\in [0,\pi/4]$ this ends the proof. 
%
\end{proof}
\begin{remark}\label{rem:kala}
{\em An immediate consequence of Lemma \ref{lem:ele} is that a weighted graph $G$ such that $\k(G)=\l(G)$ must satisfy $2\b(G)=\l(G)=\frac12\varrho(G)$.}
\end{remark}
\begin{remark}\label{rem:kala2}
{\em Another simple consequence of Lemma \ref{lem:ele} is that for all weighted graphs $G$ with $n>2$ nodes,
\begin{align}\label{kala3}
\k(G)\geq \frac{\left(1-\tfrac2n\right)2\log(2)}{\log(n-1)}\,\l(G).
\end{align}
Indeed, for every $f\geq 0$, \cite[Corollary A.4]{DS} shows that 
\begin{align}\label{kala4}
\ent f \leq \frac{\log(n-1)}{\left(1-\tfrac2n\right)} \,\var \sqrt f.
\end{align} 
Therefore, using  the definition of $\l(G)$ and \eqref{kbr2}, 
$$
\frac{(1-\tfrac2n)}{\log(n-1)}\,\l(G) \,\ent f \leq
\frac{2}n\sum_{x,y}c_{xy} \smallvar_{xy} \sqrt f \leq \frac1{2\log(2)}\,\frac{2}n\sum_{x,y}c_{xy} \smallent_{xy} f. 
$$
}
\end{remark}
In the hypergraph setting the comparison is not as tight as in Lemma \ref{lem:ele}. Let $\a=\{\a_A,\,A\subset V\}$ denote a collection of nonnegative weights, and let $\k[\a]$ be defined as in \eqref{kentalpha}. Recall the definitions 
\begin{align}\label{smallvarcov}
\smallvar_A f =\frac1{|A|}\sum_{x\in A}(f(x)-\bar f_A)^2,  \quad
\smallcov_A (f,\log f)= \frac1{|A|}\sum_{x\in A}(f(x)-\bar f_A)\log f(x).
\end{align}
and let $\b[\a],\varrho[\a]$ be defined as in \eqref{kentalpha} with $\smallent_A f$ replaced by $\smallvar_A \sqrt f$ and $ \smallcov_A (f,\log f)$ respectively. \begin{lemma}\label{lem:ele2}
For any $A\subset V$, for all $f:V\mapsto\bbR_+$,
\begin{align}\label{kbr10}
\smallvar_A \sqrt f \leq \smallent_A f \leq \smallcov_A (f,\log f) .
\end{align}
In particular, for any $\a$ one has
\begin{align}\label{kbr11}
\b(G)=\b[\a]\leq \k[\a]\leq \varrho[\a]=\varrho(G)\,,
\end{align}
where $G$ is given by \eqref{c_alpha}. 
\end{lemma}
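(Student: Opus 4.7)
The plan is to prove the two pointwise inequalities of \eqref{kbr10} separately, then deduce \eqref{kbr11} by combining them with the variational definitions of the constants and with well-known pair decompositions of the variance and covariance functionals. I start with the easy upper bound $\smallent_A f \leq \smallcov_A(f,\log f)$. A direct expansion gives
\[
\smallcov_A(f,\log f) - \smallent_A f = \bar f_A\left(\log \bar f_A - \frac{1}{|A|}\sum_{x\in A}\log f(x)\right),
\]
which is nonnegative by Jensen's inequality applied to the concave function $\log$.

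For the lower bound $\smallvar_A \sqrt f \leq \smallent_A f$ I would establish the pointwise inequality
\[
a\log(a/b) \geq (a-b) + (\sqrt a - \sqrt b)^2, \qquad a,b > 0.
\]
After the substitution $s = \sqrt a$, $t = \sqrt b$ this reduces to $s\log(s/t) \geq s - t$, i.e.\ to $\log x \geq 1 - 1/x$ with $x = s/t$, which is elementary. Applying the bound pointwise with $a = f(x)$, $b = \bar f_A$ and averaging over $x \in A$, the linear term $(f(x) - \bar f_A)$ vanishes and one obtains
\[
\smallent_A f \geq \frac{1}{|A|}\sum_{x\in A}(\sqrt{f(x)} - \sqrt{\bar f_A})^2 \geq \smallvar_A \sqrt f,
\]
where the final step uses the variational characterization of the variance as the minimal centered second moment.

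Having proved \eqref{kbr10}, the chain $\b[\a] \leq \k[\a] \leq \varrho[\a]$ is obtained by integrating the pointwise bounds against the weights $|A|\a_A/n$, summing over $A$, and invoking the definitions of $\b[\a]$, $\k[\a]$, $\varrho[\a]$ as optimal constants. The equalities $\b[\a] = \b(G)$ and $\varrho[\a] = \varrho(G)$ follow from the pair decompositions
\[
\smallvar_A \sqrt f = \frac{1}{2|A|^2}\sum_{x,y\in A}(\sqrt{f(x)} - \sqrt{f(y)})^2,
\]
\[
\smallcov_A(f,\log f) = \frac{1}{2|A|^2}\sum_{x,y\in A}(f(x) - f(y))(\log f(x) - \log f(y)),
\]
which are straightforward algebraic identities. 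Substituting into $\frac{1}{n}\sum_A |A|\a_A(\,\cdot\,)$ and swapping the order of summation, the coefficient at each pair $(x,y)$ becomes exactly $\sum_{A\ni x,y}\a_A/|A| = c_{xy}$, so the hypergraph Dirichlet forms collapse onto the graph Dirichlet forms of \eqref{kentbr}. The conceptual point, already emphasized in Remark~\ref{rem:1particle}, is that no such pair decomposition is available for $\smallent_A f$, which is precisely why $\k[\a]$ does not reduce to a graph constant. I do not expect any serious obstacle; the only mildly delicate step is the pointwise inequality $a\log(a/b)\geq (a-b)+(\sqrt a-\sqrt b)^2$, and even that reduces to $\log x\geq 1-1/x$.
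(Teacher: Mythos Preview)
Your proof is correct and follows essentially the same route as the paper: the upper bound via Jensen and the deduction of \eqref{kbr11} from \eqref{kbr10} match the paper's argument (the pair decompositions you spell out are what the paper alludes to via Remark~\ref{rem:1particle}). The only difference is that for $\smallvar_A\sqrt f\le\smallent_A f$ the paper simply cites \cite[Lemma 1]{LO}, whereas you give a self-contained elementary proof via the pointwise bound $a\log(a/b)\ge(a-b)+(\sqrt a-\sqrt b)^2$, which is a nice touch.
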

\begin{proof}
The inequality $\smallvar_A \sqrt f \leq \smallent_A f$ is a consequence of \cite[Lemma 1]{LO}, while $\smallent_A f \leq \smallcov_A (f,\log f)$
follows from Jensen's inequality, since 
$$
\smallcov_A (f,\log f) =  \frac1{|A|}\sum_{x\in A}f(x)\log f(x) - \frac{\bar f_A}{|A|}\sum_{x\in A}\log f(x)\geq \smallent_A f .
$$
The relations $\b[\a]\leq \k[\a]\leq \varrho[\a]$ follow from \eqref{kbr10}. 
As observed in Remark \ref{rem:1particle} we have $\b[\a]=\b(G)$ and $\varrho[\a]=\varrho(G)$ if $G$ is given by \eqref{c_alpha}.
\end{proof}

\subsection{Technical estimates}
Define $\mu_\r(a,b) = \r a+(1-\r)b$ the mean of $(a,b)$ w.r.t $\cB(\r)$, the Bernoulli distribution with parameter $\r$. The entropy of $(a,b)$ w.r.t $\cB(\r)$  will be written as
\begin{equation}\label{def:psir}
	\psi_\r(a,b) = \r \,a\log(a/\mu_\r(a,b)) +(1-\r)b\log(b/\mu_\r(a,b)).
\end{equation}
We define the symmetrized entropy
\begin{equation*}
	\bar\psi_\r(a,b) = \frac{\psi_\r(a,b)+\psi_{1-\r}(a,b)}2 =  \frac{\psi_\r(a,b)+\psi_\r(b,a)}2,
\end{equation*}
and let $h(\r) = -\r\log(\r)-(1-\r)\log(1-\r)$  denote the Shannon entropy of $\cB(\r)$.
\begin{lemma}\label{lem:tec1}
	For any $a,b\geq 0$ the application
	\begin{equation*}
		\r\mapsto\frac{\bar\psi_\r(a,b)}{h(\r)},
	\end{equation*}
	is non-decreasing for $\r\in[0,1/2]$.
\end{lemma}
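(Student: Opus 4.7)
The plan is to invoke the standard l'Hospital-type monotonicity rule. Since $\bar\psi_0(a,b)=0=h(0)$ and $h'(\rho)>0$ on $(0,1/2)$, the ratio $\bar\psi_\rho(a,b)/h(\rho)$ is non-decreasing on $[0,1/2]$ as soon as $\bar\psi_\rho'(a,b)/h'(\rho)$ is non-decreasing on $(0,1/2)$.

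By the built-in symmetry $\bar\psi_\rho(a,b)=\bar\psi_\rho(b,a)$ and the degree-$1$ homogeneity of $\bar\psi_\rho$ in $(a,b)$, I would normalize $a=1-t$, $b=1+t$ with $t\in[0,1]$; the case $t=0$ is trivial. A direct calculation starting from the identity $\psi_\rho(a,b)=\rho a\log a+(1-\rho)b\log b-m_\rho\log m_\rho$, where $m_\rho=\rho a+(1-\rho)b$, and the analogous expression for $\psi_\rho(b,a)$ yields after cancellation the clean identity
\begin{equation*}
2\,\bar\psi_\rho'(a,b)=(a-b)\log(m_{1-\rho}/m_\rho)=2t\log\frac{1+s}{1-s},\qquad s:=t(1-2\rho).
\end{equation*}

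Next, introduce the half-log-odds variables $\alpha:=\tanh^{-1}(1-2\rho)$ and $\beta:=\tanh^{-1}(s)$. Then $h'(\rho)=2\alpha$, $\bar\psi_\rho'(a,b)=2t\beta$, and $\tanh\beta=t\tanh\alpha$, so in particular $0\le\beta\le\alpha$ on $(0,1/2)$. Differentiating the ratio $\bar\psi_\rho'/h'=t\beta/\alpha$ and using the identities $d\alpha/d\rho=-2\cosh^2\alpha$ and $d\beta/d\rho=-2t\cosh^2\beta$, together with $t=\tanh\beta/\tanh\alpha$, monotonicity reduces after straightforward algebra to the single inequality
\begin{equation*}
\frac{\sinh(2\beta)}{2\beta}\,\le\,\frac{\sinh(2\alpha)}{2\alpha},
\end{equation*}
which is immediate from the classical fact that $x\mapsto\sinh(x)/x=\sum_{k\ge 0}x^{2k}/(2k+1)!$ is non-decreasing on $[0,\infty)$ and $\alpha\ge\beta\ge 0$.

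The only non-mechanical step is spotting the reduction to the monotonicity of $\sinh(x)/x$; the rest is routine bookkeeping. I expect the main obstacle to be purely notational: organizing the differentiation of $\bar\psi_\rho$ cleanly enough that the cancellations leading to $(a-b)\log(m_{1-\rho}/m_\rho)$ and the subsequent hyperbolic substitution become transparent rather than opaque.
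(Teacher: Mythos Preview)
Your argument is correct and takes a genuinely different route from the paper's. The paper differentiates the ratio $\bar\psi_\rho/h(\rho)$ directly in $\rho$, obtaining a function $g(t)$ whose sign must be analyzed; this is done by computing $g''(t)$, reducing to a degree-four polynomial $P$, showing $P$ changes sign at most once on $[0,1/2]$, and then concluding via the boundary values $g(0)=g(1/2)=0$. It is a hands-on convexity analysis with a fair amount of case checking.

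Your proof instead exploits the boundary vanishing $\bar\psi_0=h(0)=0$ to invoke the l'Hospital monotone rule, shifting the problem to the monotonicity of $\bar\psi_\rho'/h'$. The hyperbolic substitution $\alpha=\tanh^{-1}(1-2\rho)$, $\beta=\tanh^{-1}(t(1-2\rho))$ is the key simplification: it turns $h'(\rho)$ and $\bar\psi_\rho'$ into $2\alpha$ and $2t\beta$, and the constraint into $\tanh\beta=t\tanh\alpha$, after which the computation $\frac{d}{d\rho}(t\beta/\alpha)\ge 0$ reduces cleanly to $\sinh(2\beta)/(2\beta)\le\sinh(2\alpha)/(2\alpha)$. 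I checked your derivative identities $d\alpha/d\rho=-2\cosh^2\alpha$, $d\beta/d\rho=-2t\cosh^2\beta$ and the algebra leading to the $\sinh(x)/x$ inequality; they are all correct, including the edge case $t=1$ where $\beta=\alpha$ and the ratio is constant. What you gain is a short, conceptual proof with a single clean inequality; what the paper's approach buys is that it stays in the original variables and requires no outside lemma, at the cost of the polynomial analysis.
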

\begin{proof}
	By homogeneity we can assume $a=t,b=1-t$, with $t\in[0,1]$. To avoid heavy notation we will also write $\mu_\r(t,1-t)$ as $\mu_\r$.
	After simplifications,
	\begin{equation*}
		\frac{\partial}{\partial\r}\frac{\bar\psi_\r(t,1-t)}{h(\r)} =\frac1{h(\r)^2}\PAR{ h(\r)\frac{1-2t}2\log\PAR{\frac{\mu_\r}{\mu_{1-\r}}}-\bar\psi_\r(t,1-t)\log\PAR{\frac {1-\r}\r}}.
	\end{equation*}
	Fix now $\r\in[0,1/2]$ and define
	\begin{equation*}
		g(t) =  h(\r)\frac{1-2t}2\log\PAR{\frac{\mu_\r}{\mu_{1-\r}}}-\bar\psi_\r(t,1-t)\log\PAR{\frac {1-\r}\r},
	\end{equation*} so that $\frac{\partial}{\partial\r}\frac{\bar\psi_\r(t,1-t)}{h(\r)}$ has the same sign as $g$. We have to show that $g(t)\geq 0$, $t\in[0,1]$.
	
	\begin{figure}[h!]
		\centering
		\begin{subfigure}[t]{0.5\textwidth}
			\centering
			\includegraphics[height=2in]{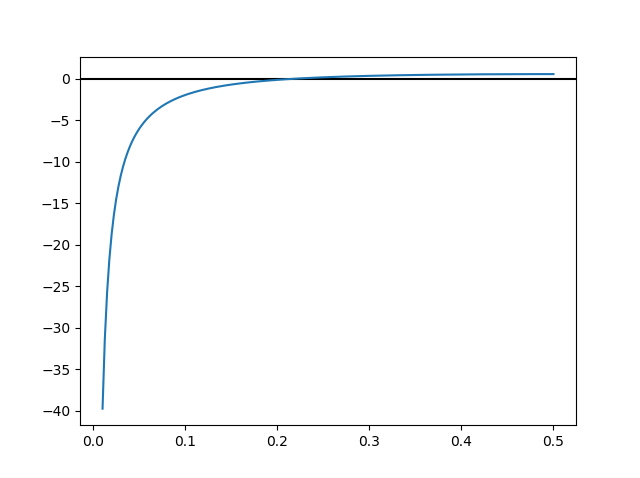}
			\caption{Graph of $g''$ for fixed $\r=0.3$}
		\end{subfigure}%
		~ 
		\begin{subfigure}[t]{0.5\textwidth}
			\centering
			\includegraphics[height=2in]{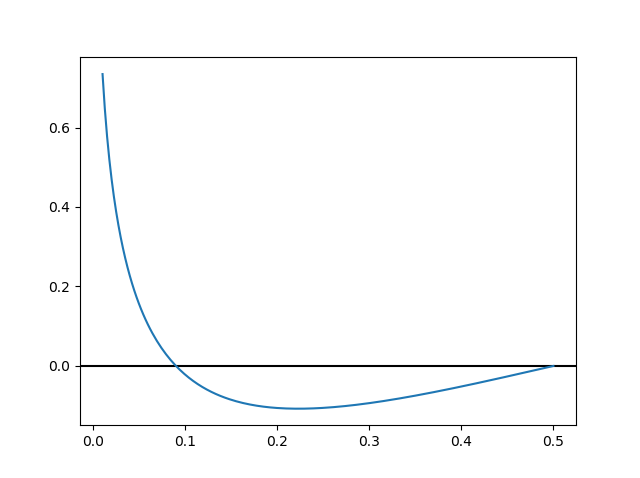}
			\caption{Graph of $g'$ for fixed $\r=0.3$}
		\end{subfigure}
		~
		\begin{subfigure}[t]{0.5\textwidth}
			\centering
			\includegraphics[height=2in]{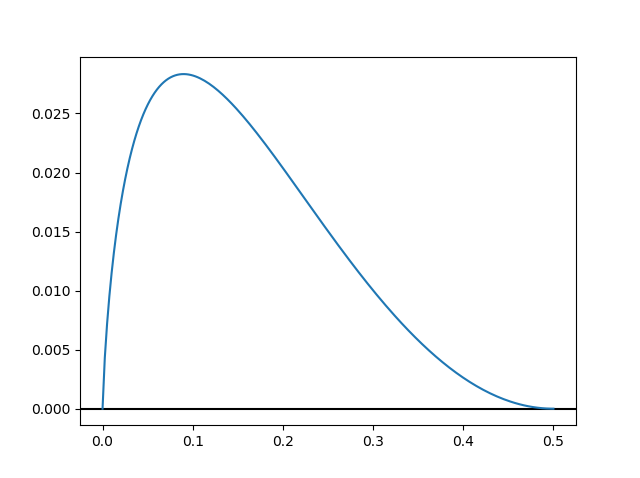}
			\caption{Graph of $g$ for fixed $\r=0.3$}
		\end{subfigure}
	\caption{Variations of the function $g$ from the proof of Lemma \ref{lem:tec1}}
	\label{fig1}
	\end{figure}
	By differentiating twice, 
	\begin{equation*}
	\begin{split}
		g''(t)& = h(\r)\frac{2(1-2\r)}{\mu_\r\mu_{1-\r}}+h(\r)\frac{(1-2t)^2(1-2\r)^2}{2(\mu_\r\mu_{1-\r})^2}-\log\PAR{\frac {1-\r}\r}\frac{\r(1-\r)}{2t(1-t)\mu_\r\mu_{1-\r}}\\
		&=
		\frac{P(t)}{2t(1-t)(\mu_\r\mu_{1-\r})^2}	,
	\end{split}
	\end{equation*}
	where $P$ is the polynomial  of degree 4 defined by
	\begin{align*}
	P(t)&=4(1-2\r)h(\r)t(1-t)\mu_\r\mu_{1-\r}+\\& \qquad+h(\r)t(1-t)(1-2t)^2(1-2\r)^2 -\log\left({\frac {1-\r}\r}\right)\r(1-\r)\mu_\r\mu_{1-\r}.
	\end{align*}
	Thus, $g''$ has the same sign as $P$.
		Notice $P(1-t)=P(t)$, thus $P(x) = Q(x(1-x))$ for some polynomial $Q$ of degree 2. In fact if $Q(x)=ax^2+bx+c$, then $a=-8\r(1-2\r)^2h(\r) < 0$ and  $b > -2a$ for $\r\leq1/2$. Therefore $Q$ is  increasing for $x\in[0,1/4]\subset(-\infty,-b/2a]$. Furthermore $t\mapsto t(1-t)$ is an increasing bijection from $[0,1/2]$ to $[0,1/4]$, thus $t\mapsto P(t)=Q(t(1-t))$ is increasing for $t\leq 1/2$. In particular, $P$ has at most one root $x$ in $[0,1/2]$. 
	In conclusion,  $g''$ has at most one root in $[0,1/2]$ and $g''(0^+)=-\infty$ so $g$ has the behavior depicted in Figure \ref{fig1}.

	Suppose there exists a  local minimum $t_0 \in(0, 1/2)$ of $g$, so that  $g'(t_0) = 0$ and $g''(t_0)\geq 0$. Since $g''$ changes  sign  at most once, we have $g''(t)\geq 0$ for all $t\in[t_0,1/2]$. But one can check that $g'(1/2) = 0$, so in that case $g'$ would be constant on the interval $[t_0,1/2]$. It follows that $P=0$ on $[t_0,1/2]$ and since this interval has a non-empty interior, $P=0$ uniformly. So $g''=g'=0$ on $[0,1/2]$, and $g(1/2) = 0$ so $g$ is also constant at 0, proving that $g$ is non-negative.
	
	On the other hand, if $g$ is non-constant, then the only stationary point of $g$ with non-negative second derivative is $1/2$, proving that the minimum of $g$ is either at $0$, or $1/2$. 
	However, $g(1/2)=g(0)=0$ 
	and thus  	$g(t) \geq 0$, for all $t\in[0,1]$.
		\end{proof}

In the case $\r=1-\r=1/2$ we write $\psi=\psi_{1/2}=\bar\psi_{1/2}$ as in \eqref{def:psi}. Thus Lemma \ref{lem:tec1} shows that 
$\bar\psi_\r(a,b)/h(\r)\leq \psi(a,b)/h(1/2)$ for all $\r\in[0,1]$ and all $a,b\geq 0$.
The following estimate is then a direct corollary of Lemma \ref{lem:tec1}. 
\begin{corollary}\label{cor:tec1}
	For any $\r\in [0,1]$,
	\begin{equation*}
		\sup_{a,b >0}\frac{\bar\psi_\r(a,b)}{\psi(a,b)} = \frac{\bar\psi_\r(0,1)}{\psi(0,1)}=\frac{h(\r)}{\log(2)}.
	\end{equation*}
\end{corollary}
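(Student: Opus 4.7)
The plan is to derive this as a near-immediate consequence of Lemma \ref{lem:tec1}, supplemented by a direct computation at the boundary point $(a,b)=(0,1)$. First I would reduce to $\r\in[0,1/2]$ by observing the symmetries $\bar\psi_\r(a,b)=\bar\psi_{1-\r}(a,b)$ (visible from the definition, since swapping $\r$ and $1-\r$ exchanges the two summands) and $h(\r)=h(1-\r)$. Both the target ratio $\bar\psi_\r/\psi$ and the claimed bound $h(\r)/\log 2$ are thus invariant under $\r\mapsto 1-\r$, so nothing is lost.

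Next, applying Lemma \ref{lem:tec1} with $\r\leq 1/2$, for every fixed $a,b\ge0$ the function $\r\mapsto \bar\psi_\r(a,b)/h(\r)$ is non-decreasing on $[0,1/2]$, hence bounded above by its value at $1/2$, namely $\psi(a,b)/\log 2$ (using $\bar\psi_{1/2}=\psi$ and $h(1/2)=\log 2$). Rearranging yields
\begin{equation*}
\frac{\bar\psi_\r(a,b)}{\psi(a,b)} \leq \frac{h(\r)}{\log 2}
\end{equation*}
uniformly in $a,b>0$, which is the upper bound on the supremum.

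For the matching lower bound I would evaluate at the limit point $(a,b)=(0,1)$ (approached along $(a,b)=(\varepsilon,1-\varepsilon)$ with $\varepsilon\downarrow 0$, using $0\log 0=0$ to extend by continuity). A direct calculation gives $\mu_\r(0,1)=1-\r$ and $\mu_\r(1,0)=\r$, hence $\psi_\r(0,1)=-(1-\r)\log(1-\r)$ and $\psi_\r(1,0)=-\r\log\r$, so that
\begin{equation*}
\bar\psi_\r(0,1)=\tfrac12\bigl(-\r\log\r-(1-\r)\log(1-\r)\bigr)=\tfrac12\,h(\r),
\end{equation*}
while $\psi(0,1)=\tfrac12\log 2$. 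Their ratio is $h(\r)/\log 2$, which shows the supremum is attained in the limit and equals the claimed value.

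There is no real obstacle here: the entire content is that Lemma \ref{lem:tec1} converts monotonicity in $\r$ into a uniform bound in $(a,b)$, and the boundary case saturates. The only point requiring a line of care is the continuous extension of $\psi_\r$ to include zero arguments, which is standard.
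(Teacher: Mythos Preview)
Your proposal is correct and follows exactly the approach the paper intends: it uses the monotonicity from Lemma~\ref{lem:tec1} (after the harmless symmetry reduction to $\r\in[0,1/2]$) to get the uniform upper bound $h(\r)/\log 2$, and then verifies by direct computation that the boundary point $(0,1)$ saturates it. The paper states this corollary with no further proof beyond the sentence preceding it, so your write-up simply fills in the details of the same argument.
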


We will also need the following higher dimensional estimate. For any $a\in\bbR_+^n$, we define
	\begin{equation}\label{def:psihat}
		\widehat\psi_\rho(a) = \frac1n\sum_{i=1}^n\psi_\rho(a_i,\bar a_i),
	\end{equation}
	with $\bar a_i = \frac1{n-1}\sum_{j\neq i}a_j$, and $\ent(a)=\frac1n\sum_{i=1}^n\log(a_i/\mu(a))$, with $\mu(a)=\frac1n\sum_{i=1}^na_i$.

\begin{lemma}\label{lem:tec2}
	For any $n\geq 2$ and for $\rho= 1/n$, 
	\begin{equation}\label{eq:monf}
		\sup_{a\in\R_+^n}\frac{\widehat\psi_\rho(a)}{\Ent(a)}  = \frac{h(1/n)}{\log n},
	\end{equation}
	where the supremum is over all non-constant $a\in\R_+^n$.
\end{lemma}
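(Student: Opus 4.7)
Proof plan. First I will reduce the claim to a one-parameter entropy inequality on the probability simplex. By the homogeneity of $\widehat\psi_{1/n}(a)$ and $\Ent(a)$ in $a$, we may normalize $\mu(a)=1$ and introduce the probability distribution $p$ on $\{1,\ldots,n\}$ defined by $p_i = a_i/n$. The distinguishing feature of $\rho = 1/n$ is the identity $\mu_\rho(a_i,\bar a_i) = \mu(a)$ for every $i$; expanding $\psi_{1/n}(a_i,\bar a_i)$ in terms of $p_i$ and collecting terms yields the closed-form expressions
\[
\widehat\psi_{1/n}(a) \;=\; h(1/n) \,-\, \frac{1}{n}\sum_{i=1}^n h(p_i), \qquad \Ent(a) \;=\; \log n \,-\, H(p),
\]
where $h(t) = -t\log t - (1-t)\log(1-t)$ is the binary entropy and $H(p) = -\sum_i p_i \log p_i$ is the Shannon entropy of $p$. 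Probabilistically, the first identity realizes $\widehat\psi_{1/n}(a)$ as the mutual information $I(X;Z)$ between a uniform $X$ on $\{1,\ldots,n\}$ and a coin $Z \mid X=i \sim \mathrm{Bern}(p_i)$, while the second is the Kullback--Leibler divergence of $p$ from the uniform distribution. With these formulas, \eqref{eq:monf} is equivalent to the sharp one-variable entropy inequality
\[
\sum_{i=1}^n h(p_i) \;\geq\; c_n\, H(p), \qquad c_n \;:=\; \frac{n\,h(1/n)}{\log n} \;=\; \frac{n\log n - (n-1)\log(n-1)}{\log n},
\]
valid for every probability distribution $p$ on $\{1,\ldots,n\}$. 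The entropy inequality is tight both at the uniform distribution (which corresponds to constant $a$, excluded from the original supremum) and at each Dirac mass (which corresponds to $a$ concentrated at one vertex and realizes the supremum $h(1/n)/\log n$ in the original ratio).

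I will prove this entropy inequality by induction on $n$. The base case $n = 2$ is an identity, since $h(p_1) = h(p_2) = H(p)$ and $c_2 = 2$; this is consistent with (and essentially equivalent to) the $\rho = 1/2$ case of Corollary \ref{cor:tec1}. For the inductive step on $n \geq 3$, configurations on the boundary of the simplex (some $p_i = 0$) reduce to the analogous inequality on $n-1$ points. Since $n \mapsto c_n$ is strictly decreasing (an elementary calculus check), the inductive hypothesis applied with the larger constant $c_{n-1}$ controls the boundary. For the interior I will use a Lagrange multiplier analysis of $L(p) := \sum_i h(p_i) - c_n H(p)$ on the relative interior of the simplex. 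A direct gradient computation shows that any interior critical point satisfies $(1-p_i)\,p_i^{\,c_n-1} = \mathrm{const}$, and since the left-hand side is a unimodal function of $p$ on $(0,1)$, any such critical configuration takes at most two distinct values among the $p_i$. The uniform distribution $p \equiv 1/n$ is always such a critical point, at which $L = 0$ and the Hessian restricted to the tangent space of the simplex is positive semidefinite.

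The main obstacle is to verify $L \geq 0$ on the family of two-value interior critical configurations. These are indexed by the integer $k \in \{1,\ldots,n-1\}$ counting the entries at one of the two distinct values; for each $k$, the critical relation together with the normalization $\sum_i p_i = 1$ eliminates one variable and reduces the sign of $L$ to a one-dimensional inequality. I expect to handle this step by a perturbative monotonicity comparison against the uniform case, exploiting the convexity and unimodality properties of $(1-p)\,p^{\,c_n-1}$, together with a direct analysis of the two boundary scenarios of this one-parameter family: either two entries collapse and the configuration returns to uniform, or one of the values is driven to $0$ or $1$, at which point the induction hypothesis on $n-1$ takes over. This final case analysis is the delicate part of the argument and where unavoidable case-work is concentrated.
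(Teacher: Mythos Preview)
Your reduction to the scalar entropy inequality $\sum_i h(p_i)\ge c_n\,H(p)$ is correct and is a genuine simplification over the paper's formulation; the identity $\widehat\psi_{1/n}(a)=h(1/n)-\tfrac1n\sum_i h(p_i)$ makes the structure transparent. The paper does not make this substitution and instead runs the variational analysis directly on $\widehat\psi_{1/n}(a)/\Ent(a)$. Your induction on $n$ to dispose of the boundary of the simplex is also sound (the monotonicity $c_{n-1}>c_n$ is easily checked), and this is a device the paper does not use.

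Where your proposal and the paper converge is the heart of the matter: after the Lagrange reduction to two-valued configurations indexed by $k\in\{1,\dots,n-1\}$, one must rule out interior extremizers other than the uniform point (for you, show $L\ge 0$ on every such branch). Here you offer only a heuristic --- ``perturbative monotonicity comparison against the uniform case'' and ``boundary scenarios'' --- and it is not clear this can be made to work without essentially the same amount of hard analysis the paper carries out. The paper's argument at this step is quite specific: for $k\ge 2$ it writes the derivative of the one-parameter function (your analog of $L$ along the branch) as a quadratic polynomial $P$ divided by an explicit positive factor, and then shows $P>0$ on the admissible interval by estimating its three coefficients separately, using the delicate two-sided bound $\tfrac1{n-1}<h(1/n)/\log n<\tfrac2n$. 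For $k=1$ it uses concavity plus the exact value at the endpoint $x=n$. None of this is captured by unimodality of $(1-p)p^{c_n-1}$ alone, and ``sending one value to $0$ or $1$ and invoking induction'' does not by itself control the interior of the branch, which is where a spurious minimum of $L$ could in principle sit.

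In short: your reformulation is cleaner and your boundary/induction idea is a real gain, but the step you flag as the ``main obstacle'' is exactly where the paper spends its effort, and your proposal does not yet supply a substitute for that computation. You should expect to carry out an explicit sign analysis of the one-parameter family for each $k$, comparable in precision to the paper's polynomial argument.
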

\begin{proof}
	We may assume $n>2$ since the claim is trivial at $n=2$. We observe that when $a$ is a Dirac mass $a_i=\IND_{i=i_0}$ for some fixed $i_0$, then $\sum_{i=1}^n\psi_{1/n}(a_i,\bar a_i)=h(1/n)$ and $\Ent(a)=\log(n)/n$,  and thus we need to show that the Dirac mass is a maximizing vector for $\widehat\psi_{1/n}(\cdot)/\Ent(\cdot)$. To achieve our goal we will start by proving that any maximizing function takes only two values, and then that in fact it takes only once its maximal value and $n-1$ times its other value. Finally, we will observe that the maximum of $\widehat\psi_{1/n}(\cdot)/\Ent(\cdot)$ over such functions is achieved only at Dirac masses.
	
	Let  us start by proving the inequality when $a$ approaches a constant vector. If $a = 1+\varepsilon g$ with $\mu[g] = 0$, and $\varepsilon>0$, then we can expand 
	\begin{equation*}
		\begin{split}
			\widehat\psi_\rho(a) &= \frac{\varepsilon^2}2\rho(1-\rho)\frac1n\sum_{i=1}^n(g_i-\bar g_i)^2+o(\varepsilon^2)\\
			& = \frac{\varepsilon^2}2\rho(1-\rho)\frac1n\sum_{i=1}^n((1+1/(n-1))g_i)^2+o(\varepsilon^2)\\
			&= \frac{\varepsilon^2\rho(1-\rho)n^2}{2(n-1)^2}\var(g)+o(\varepsilon^2).
		\end{split}
	\end{equation*}
	Since $\ent(a) = \frac{\varepsilon^2}2\var(g)+o(\varepsilon^2)$, it follows that $\widehat\psi_{1/n}(a)/\Ent(a)\to 1/(n-1)$ as $\varepsilon\to 0$, for any fixed $g$ with $\mu(g)=0$.  One can check that, for all $n>2$,
	\begin{equation*}
		\frac{1}{n-1} < \frac{h(1/n)}{\log(n)} <\frac2n.
	\end{equation*}
	In particular, this proves that the supremum of $\widehat\psi_{1/n}(a)/\Ent(a)$ is not at a point of discontinuity and so there exists a non-constant maximizing vector $a$ at which the extremal value is achieved. 
	
	Since $\rho =1/n$, one has $\rho a_i+(1-\rho)\bar a_i = \mu(a)$. Thus,  assuming $\mu(a)= 1$,
	\begin{equation*}
		\partial_i\widehat\psi_\rho(a) = \frac1n\rho \log(a_i)+\frac{1-\rho}{n(n-1)}\sum_{j\neq i}\log(\bar a_j).
	\end{equation*}
	Let $K\geq K_*:=h(1/n)/\log(n)$ be the maximum of $\widehat\psi_\rho(\cdot)/\Ent(\cdot)$. 
	We will start by proving that any maximizing function $a:V\mapsto \bbR_+$ is two valued. 
	If $a=(a_i)$ is a maximizing function with $\mu(a)= 1$, then we must  have
		\begin{equation*}
		\grad[\widehat\psi_\rho(x)-K\Ent(x)]_{|x=a} = 0.
	\end{equation*}
	In other words,  $a$ must verify
	\begin{equation}\label{eq:zerograd}
		\rho \log(a_i)+\frac{1-\rho}{n-1}\sum_{j\neq i}\log(\bar a_j) = K\log(a_i),
	\end{equation}
	for each $i=1,\dots,n$. 
	Since $\mu(a)=1$, we have $\bar a_i = (n-a_i)/(n-1)$. Therefore each coordinate $a_i$ follows the same equation:
	\begin{equation}\label{eq:grad}
		(K-\rho)\log(x)+\rho\log\PAR{\frac{n-x}{n-1}} = A, 
	\end{equation}
	where $A=\rho\sum_i\log(\bar a_i)$. 
	The left hand side above 
	is increasing for $x\in[0,n-1/K]$ and decreasing on $[n-1/K,n]$. Therefore, for any $A$, the equation has at most 2 solutions. This shows that the maximiser $a$ is at most two valued. We write $x_1<x_2$ for these two values. 
	
	Let  $k$ be the number of times $a$ takes the value $x_2$. Since $\mu(a)=1$ we have $x_1 = (n-kx_2)/(n-k)$. The condition $x_2 > x_1\geq 0$ is equivalent to $n/k\geq x_2 >1$. Thus, going back to the initial equation \eqref{eq:zerograd}, $x_2$ is a solution of
	\begin{equation}\label{eq:minf}
		f(x) = (K-\rho)\log(x)-\rho\PAR{(k-1)\log\PAR{\frac{n-x}{n-1}}+(n-k)\log\PAR{\frac{n-\bar x}{n-1}}} =0,
	\end{equation}
in the interval $(1,n/k]$, 	where $\bar x:=(n-kx)/(n-k)$.

	First, we show that if $k\geq 2$ then there is no solution of \eqref{eq:minf}  in the domain $(1,n/k]$. Since $f(1)=0$, it will be sufficient to show that $f$ is strictly increasing for $x\in[0,n/k]$.
	For $k\geq 2$, after factorization  we have
	\begin{equation*}
		f'(x)  = \frac{K-\rho}{x}+\frac{\rho(k-1)}{n-x}-\frac{\rho k}{n-\bar x}\geq\frac{P(x)}{x(n-x)(n-\bar x)},
	\end{equation*}
	where 
	$P$ is the polynomial of degree two  given by 
	\begin{equation*}
		P(x) = (K_*-\rho)(n-x)\PAR{n-\frac{n-kx}{n-k}}+\rho(k-1)x\PAR{n-\frac{n-kx}{n-k}}-\frac{\rho k}{n-1}x(n-x).
	\end{equation*} By estimating the coefficients of $P$ we will show that it is positive for $x\in[0,n/k]$.
	Calculations show that the leading coefficient of $P$ is 
	\begin{equation*}
		\frac{k(k-1)}{n(n-k)}+\frac{k}{n(n-1)}-\frac{k}{n-k}(K_*-\rho)\geq \frac{k}{n-1}\PAR{\rho-(K_*-\rho)} > 0,
	\end{equation*}
	where we recall $K_* < 2\rho$. Furthermore, its coefficient of degree 1 is 
	\begin{equation*}
		k-1-\frac{k-1}{n-k}-(K_*-\rho)n\PAR{1-\frac{k+1}{n-k}}-\frac{k}{n-1}.
	\end{equation*}
	For $n\geq 5$, we have $(K_*-\rho)n = \frac{(n-1)\log(n/(n-1))}{\log(n)} < \frac{n-1}{n+1} < 1$, so that the coefficient of degree 1 is a concave function of $k$. Indeed,
	\begin{equation*}
		\frac{\partial^2}{\partial k^2}\PAR{-\frac{k-1}{n-k}+(K_*-\rho)n\frac{k+1}{n-k}} = 2\frac{(K_*-\rho)n(n+1)-(n-1)}{(n-k)^3} < 0
	\end{equation*} 
%
Therefore for $k\in [2,n-1]$ it is bounded from below by the minimum of its values at $k=2$ and $k=n-1$. (Note that if $n=3,4$ this is trivially true since either $k=2$ or $k=n-1$ in this case.) For $k=2$ we have
	\begin{equation*} 		
		1-\frac1{n-1}-(K_*-\rho)n\PAR{1-\frac{3}{n-1}}-\frac{2}{n-1}> 1-\frac1{n-1}-\PAR{1-\frac{3}{n-1}}-\frac{2}{n-1} =0,
	\end{equation*}
	and for $k=n-1$,
	\begin{equation*}
	\begin{split}
		n-2-(n-2)-(K_*-\rho)n\PAR{1-n}-1 &= (n-1)n(K_*-\rho) -1
		> 0,
	\end{split}
	\end{equation*}
	where we use $K_*>1/(n-1)$. In conclusion, the first two coefficients of $P$ are positive.  Moreover,  the constant term is also positive because $P(0) = (n-n/(n-k))n(K-\rho) > 0$. This proves that for any $k\geq 2$, $P$ is positive on $\R_+$. Recalling that, for all $x\in [0,n/k]$,
	\begin{equation*}
		f'(x) \geq \frac{P(x)}{x(n-x)(n-x_1)}> 0,
	\end{equation*} 
	we see that $f$ is strictly increasing for $k\geq 2$. Since $f(1) = 0$, there are no solution of \eqref{eq:minf} for $k\geq 2$, in the domain $(1,n/k]$. 
	
	Summarizing, we have proved that any maximiser $a$ with $\mu(a)=1$ takes two values $x_1<x_2$, and that $x_2$ is taken only once, while $x_1$ is taken $n-1$ times. 
	To conclude the prof, observe that if $k=1$, then $f$ is concave in $[0,n]$, and
	\begin{equation*}
		\begin{split}
			f(n) &= (K-K_*)\log(n) +(K_* -\rho)\log(n)-\rho(n-1)\log\PAR{\frac{n}{n-1}}\\
			& = (K-K_*)\log(n).
		\end{split}
	\end{equation*}
	Therefore, if $K>K_*$ there are no non-constant solutions of \eqref{eq:minf}. 
	It follows that $K=K_*=h(1/n)/\log(n)$, and $x_1=0$, $x_2=n$. 
\end{proof}
\begin{remark}\label{rem:unique}
{\em 	The proof of Lemma \ref{lem:tec2} also shows that if $a\in\bbR_+^n$ is such that $\ent(a)\neq 0$ and $a$ has at least two non-zero entries then 
$$\widehat\psi_{1/n}(a)<\frac{h(1/n)}{\log n}\,\Ent(a),$$ so that the maximum in \eqref{eq:monf} is only achieved when $a$ is a multiple of a Dirac mass.}
\end{remark}
\begin{remark}\label{rem:eqcor}
{\em 	An equivalent  formulation of Lemma \ref{lem:tec2}  is
	\begin{equation}\label{eqcor}
		\sup_{a\in\R_+^n}\frac{\Ent(\bar a)}{\Ent(a)} = 1-\frac{\log(n-1)}{\log(n)},
	\end{equation}
	where the supremum is over all non-constant $a\in\R_+^n$, and 
the vector $\bar a$ is defined after \eqref{def:psihat}.
Indeed, if $\rho=1/n$, then
	\begin{equation*}
		\widehat\psi_\rho(a) = \frac{1}{n}\sum_{i=1}^n\rho a_i\log (a_i/\mu) +(1-\rho)\bar a_i\log(\bar a_i/\mu) = \rho\Ent(a)+(1-\rho)\Ent(\bar a).
	\end{equation*}
	So $\widehat\psi_\rho(a) \leq (h(1/n)/\log(n))\Ent(a)$ is equivalent to,
	\begin{equation*}
		\Ent(\bar a)\leq \frac{h(1/n)/\log(n)-\rho}{1-\rho}\Ent(a),
	\end{equation*}
	and \eqref{eqcor} follows from $\frac{h(1/n)/\log(n)-\rho}{1-\rho}=1-\frac{\log(n-1)}{\log(n)}$.
}\end{remark}

\subsection{Proof of Theorem \ref{th:meanfield1}}
  The upper bound 
  \begin{align}\label{pfth1}
\k[\a]\leq \sum_{\ell=2}^n w_\ell \,\frac{\binom{n-1}{\ell-1}\log \ell}{\log n},
\end{align}
  follows by taking a Dirac mass $f(x)=\ind_{x=x_0}$ at a fixed vertex $x_0\in V$. Indeed, here $\ent f = \frac1n \log n$ and for any $A\subset V$ with $|A|=\ell$ one has $\smallent_A f = \ind_{x_0\in A} \frac1\ell \log \ell$, and 
   \begin{align}\label{pfth2}
   \frac1n  \sum_{|A|=\ell}|A| \smallent_A f = \frac1{n} \binom{n-1}{\ell-1}  \log \ell,
\end{align}
  which implies \eqref{pfth1}.

To prove the lower bound, it is convenient to introduce the notation $\k_\ell(n)$ for the constant $\k[\a^\ell] $ when $|V|=n$.
We have
\begin{align}\label{pfth3}
 \k[\a] = \inf_f \frac{\sum_\ell w_\ell D_\ell(f)}{\ent f} \,,\qquad  D_\ell(f)= \frac\ell{n}  \sum_{|A|=\ell} \smallent_A f,
\end{align}
where the infimum is over all non-constant functions. Since $D_\ell(f)\geq \k_\ell(n) \ent f$, it is sufficient to prove that for any $2\leq \ell\leq n$ one has 
\begin{align}\label{pfth4}
 \k_\ell(n) 
\geq  \frac{\binom{n-1}{\ell-1}\log \ell}{\log n}.
\end{align}
We proceed by induction over $n$. Namely we assume that the above bound holds for $|V|=n-1$ and for all $2\leq \ell\leq |V|$ and show that this implies \eqref{pfth4}. For $\ell=|V|=2$ the statement is trivially true since $\k_2(2)=1$ in this case.    We use the notation $\eta_x\in\{0,1\}$ for the occupation variable at site $x$. Since we have one particle only, the configuration $\eta$ is everywhere zero except for a vertex where $\eta_x=1$, and the measure $\mu$ is uniform over all $\eta\in\{0,1\}^V$ such that $\sum_{x\in V}\eta_x=1$.  Then, for a fixed $x\in V$ we may decompose the entropy along the variable $\eta_x$:
\begin{align}\label{knr2}
\ent f =\mu\left[\ent(f|\eta_{x})\right]+
\ent \left[\mu(f|\eta_{x})\right].
\end{align} 
Here $\ent(f|\eta_{x}) = \mu\left(f\log(f/\mu(f|\eta_{x}))|\eta_{x}\right)$ is the entropy with respect to the conditional distribution $\mu(\cdot|\eta_{x})$. 
Thus, $\ent(f|\eta_{x}) = 0$ if $\eta_x=1$ and $$\mu\left[\ent(f|\eta_{x})\right] = \frac{n-1}{n}\ent(f|\eta_{x}=0).$$
Since $\k_n(n)=1$ for all $n$, the bound \eqref{pfth4} is trivially true for $\ell=n$ and we may assume $2\leq \ell\leq n-1$. By definition of $\k_\ell(n-1) $ one has
\begin{align}\label{knr3}
\ent(f|\eta_{x}=0)
&\leq \frac{\ell}{(n-1)\k_\ell(n-1)} \sumtwo{|A|=\ell:}{A\not\ni x}  \smallent_{A}f .
\end{align} 
Averaging over $x$ 
we find
\begin{align}\label{knr4}
\frac1n\sum_x \mu\left[\ent(f|\eta_{x})\right]
&\leq \frac{\ell(n-\ell)}{n^2\k_\ell(n-1)} \sum_{|A|=\ell}   \smallent_{A}f=\frac{n-\ell}{n\k_\ell(n-1)} D_\ell(f).
\end{align} 
We turn to the estimate of $\ent \left[\mu(f|\eta_{x})\right]$. 
Observe that 
\begin{align}\label{kn1l5}
\ent \left[\mu(f|\eta_{x})\right] = \psi_{1/n}(\mu(f|\eta_{x}=1),\mu(f|\eta_{x}=0)),
\end{align} 
where $\psi_\r$ was defined in \eqref{def:psir}, and 
note that $\mu(f|\eta_x=1)=f(x)$ while
\begin{align}\label{kn1l56}
\mu(f|\eta_x=0) &= \frac1{n-1} \sum_{y:\,y\neq x}f(y).
\end{align}
Therefore, by Lemma \ref{lem:tec2},
\begin{align}\label{kn1l58}
\frac1n\sum_{x\in V} \ent \left[\mu(f|\eta_{x})\right] &\leq 
\frac{h(1/n)}{\log n}\,\ent f.
\end{align} 
  Summarizing, we have proved that 
  \begin{align}\label{knr21}
\left(1-\frac{h(1/n)}{\log n}\right)\ent f\leq \frac{n-\ell}{n\k_\ell(n-1)} D_\ell(f)
.
\end{align} 
  Then, using $ 1-\frac{h(1/n)}{\log n} = \frac{(n-1)\log(n-1)}{n\log n}$, and assuming inductively the validity of \eqref{pfth4} for $\k_\ell(n-1)$, we have 
\begin{align}\label{kelln151}
\ent f
&
\leq \frac{(n-\ell)\log n}{(n-1)\k_\ell(n-1)\log(n-1)} \,D_\ell(f)  \leq
\frac{\log n}{\binom{n-1}{\ell-1}\log \ell}\,D_\ell(f) 
.
\end{align} 
This ends the proof of  \eqref{pfth4}.

It remains to prove the uniqueness of the minimizer. 
To this end it is sufficient to observe that if $f$ is such that $\ent f\neq 0$ and $f$ is not a multiple of a Dirac mass, then the the first inequality in  \eqref{kelln151} is strict for all $2\leq \ell\leq n-1$. 
This follows from the fact that \eqref{kn1l58} is a strict inequality in this case, see Remark \ref{rem:unique}. 

\subsection{Entropy constant of the star graph}
Consider the graph $G=S_n$ defined by $c_{xy}=\ind_{x_*\in xy}$, where $x_*\in V$ denotes the center of the star, and $|V|=n$. In contrast with the complete graph $K_n$, the entropy constant of the star is not achieved at a Dirac mass. However, the Dirac mass at a leaf $y\neq x_*$ gives a good approximation. 
\begin{proposition}\label{prop:star}
	The star graph $G=S_n$ satisfies, for  $n \geq 3$,
\begin{equation*}
		\frac{2\log(2)\PAR{1-\frac2n}}{\log(n-1)}\leq \kappa(S_n) \leq \frac{2\log (2)}{\log (n)},
	\end{equation*}
\end{proposition}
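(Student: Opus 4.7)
The plan is to handle the two bounds independently: the upper bound comes from evaluating the Dirichlet-type ratio on a carefully chosen test function, and the lower bound comes from a direct application of Remark~\ref{rem:kala2} together with the fact that $\lambda(S_n)=1$.

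For the upper bound, I would take the Dirac mass at a leaf, namely $f=\ind_{y_0}$ for some fixed $y_0\neq x_*$. Since $\mu(f)=1/n$, one immediately gets $\ent f=(\log n)/n$. Among the $n-1$ edges of $S_n$, all of the form $\{x_*,y\}$ for a leaf $y$, the local entropy $\smallent_{xy}f$ vanishes on every edge except $\{x_*,y_0\}$, where $f$ takes values $0$ and $1$ and thus $\smallent_{x_*,y_0}f=(\log 2)/2$. Summing $\frac{2}{n}\sum_{x,y}c_{xy}\smallent_{xy}f$ over ordered pairs, the single contributing edge is counted twice, giving $(2\log 2)/n$. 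Dividing by $\ent f=(\log n)/n$ yields the ratio $2\log 2/\log n$, and hence $\kappa(S_n)\leq 2\log 2/\log n$.

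For the lower bound, the plan is to apply Remark~\ref{rem:kala2} directly:
$$
\kappa(G) \,\geq\, \frac{(1-\tfrac{2}{n})\,2\log 2}{\log(n-1)}\,\lambda(G),
$$
valid for any weighted graph with $n>2$ vertices. It remains to check $\lambda(S_n)\geq 1$. This is a short Poincaré computation specific to the star: the Dirichlet form reduces to $\cE(f,f)=\frac{1}{n}\sum_{y\neq x_*}(f(y)-f(x_*))^2$, and under the mean-zero constraint $f(x_*)=-\sum_{y\neq x_*}f(y)$ one expands and verifies $\cE(f,f)\geq \var f$ by a direct calculation. Substituting into Remark~\ref{rem:kala2} produces the stated lower bound.

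I do not expect any genuine obstacle: both parts collapse to one-line computations once the right test function is identified. The only subtlety worth emphasizing is that, in contrast with the complete graph $K_n$ treated in Theorem~\ref{th:meanfield1}, the star graph does \emph{not} realize its entropy constant at a Dirac mass at the center. A mass at $x_*$ would force all $n-1$ edges to contribute to the local entropy sum and would yield a suboptimal ratio; placing the mass at a leaf concentrates the Dirichlet cost on the single incident edge and accounts for the factor $\log n$ rather than $\log(n-1)$ in the denominator. This mismatch between the two denominators is exactly the (small) gap between the upper and lower bounds in the proposition.
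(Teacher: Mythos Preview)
Your proposal is correct and follows essentially the same route as the paper: the upper bound via a Dirac mass at a leaf, and the lower bound via Remark~\ref{rem:kala2} combined with $\lambda(S_n)=1$. The only difference is cosmetic --- the paper cites $\lambda(S_n)=1$ as well known, while you sketch the Poincar\'e computation; your sketch is fine (with $\mu(f)=0$ one finds the Dirichlet form minus the variance equals $n\,f(x_*)^2/n\ge 0$).
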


\begin{proof}
Set 
	$$D_n(a) = \frac2n\sum_{x,y\in V}c_{xy} \smallent_{xy} (a) = \frac4n\sum_{y\neq x_*} \smallent_{x_*y} (a) \,,$$
	where $a\in\bbR_+^V$, $\smallent_{xy} (a)=\psi(a_x,a_y)$, and $\psi$ is defined in \eqref{def:psi}.  If $a$ is a Dirac mass on a leaf $y\neq x_*$,  then 
	\begin{equation*}
		\frac{D_n(a)}{\Ent(a)} = \frac{2\log(2)}{\log(n)}.
	\end{equation*} 
This gives the upper bound $\kappa(S_n) \leq 2\log2/\log(n)$. For the other direction it suffices to use the bound \eqref{kala3}, together with the well known fact that $\l(S_n)=1$ for all $n$.
\end{proof}
For $n=3$ we note that  
the estimate $\kappa(S_3)\leq\frac{2\log(2)}{\log (3)} $ is not useful 
since we already know that $\kappa(S_3)\leq \l(S_3)=1$.  In fact, one can show that $\k(S_3)=1$. We omit the details.

\subsection{Remarks on electric network reductions}
Let us recall that the electric network reduction at a node $x$ of a graph $G$ with vertex set $V$ and weights $c_{yz}$,  is the new graph $G_x$ with vertex set $V\setminus \{x\}$ characterized by the new weights, for $y,z\neq x$:
 \begin{align}\label{neweights}
\widetilde c^{\,\,x}_{yz} = c_{yz} + c^{*,x}_{yz}\,,\qquad c^{*,x}_{yz}  = \frac{c_{xy}c_{xz}}{\sum_{w\neq x} c_{xw}}\,.
\end{align} 
An important property satisfied by the spectral gap is the inequality $\l(G_x)\geq \l(G)$ for all weighted graphs $G$, and for all nodes $x$, see  \cite{CLR,Dieker} for a proof. 
It is not difficult to prove that the same monotonicity under reduction holds for the log-Sobolev constant $\b(G)$ and for the modified log-Sobolev constant $\varrho(G)$ defined in \eqref{kentbr}. 
\begin{lemma}\label{lem:redo}
For any weighted graph $G$, for any node $x$, 
\begin{align}\label{lsredo}
\b(G_x)\geq \b(G)\,,\qquad  \varrho(G_x)\geq \varrho(G).
 \end{align}
 \end{lemma}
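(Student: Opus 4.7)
The plan is to reduce each inequality on $G_x$ to the corresponding one on $G$ by lifting any non-constant test function $f:V\setminus\{x\}\to\bbR_+$ to $\tilde f:V\to\bbR_+$ with a carefully chosen value at $x$, and then comparing Dirichlet forms and entropies. Set $c_*=\sum_{y\neq x}c_{xy}$, $p_y=c_{xy}/c_*$, and let $\mu,\mu'$ denote the uniform measures on $V$ and $V\setminus\{x\}$, respectively. For the log-Sobolev case I would take $\sqrt{\tilde f(x)}=\sum_y p_y\sqrt{f(y)}$ (the classical choice from the spectral-gap argument of \cite{CLR,Dieker}); for the modified log-Sobolev case I would take $\tilde f(x)=\sum_y p_y f(y)$.

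First I would record the entropy comparison
\begin{equation*}
\mathrm{Ent}_\mu(\tilde f)\;=\;\tfrac{n-1}{n}\,\mathrm{Ent}_{\mu'}(f)+\psi_{1/n}\bigl(\tilde f(x),\mu'(f)\bigr)\;\ge\;\tfrac{n-1}{n}\,\mathrm{Ent}_{\mu'}(f),
\end{equation*}
which follows from the decomposition $\mu=\tfrac1n\delta_x+\tfrac{n-1}n\mu'$ and $\psi_{1/n}\ge 0$, with $\psi_\rho$ as in \eqref{def:psir}; this step is independent of the value of $\tilde f(x)$.

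The heart of the proof is the Dirichlet-form identity
\begin{equation*}
\sum_{y,z\in V}c_{yz}\,\Gamma_{yz}(\tilde f)\;=\;\sum_{y,z\in V\setminus\{x\}}\widetilde c^{\,x}_{yz}\,\Gamma_{yz}(f),
\end{equation*}
where $\Gamma_{yz}(f)$ denotes $\smallvar_{yz}\sqrt{f}$ in the log-Sobolev case and $\smallcov_{yz}(f,\log f)$ in the modified log-Sobolev case. Contributions of edges not incident to $x$ match term by term on both sides, so everything reduces to showing that the star at $x$ contributes exactly $\sum_{y,z\neq x}c^{*,x}_{yz}\,\Gamma_{yz}(f)$. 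For the log-Sobolev case this follows from the variance identity $\sum_y p_y(g(y)-\bar g_p)^2=\tfrac12\sum_{y,z}p_yp_z(g(y)-g(z))^2$ applied to $g=\sqrt f$ with $g(x)=\bar g_p:=\sum_y p_y g(y)$, exactly as in the spectral-gap argument. The hard part is the analogue for modified log-Sobolev; here I would establish, and then exploit, the identity
\begin{equation*}
\sum_y p_y\,\Phi\bigl(\bar f_p,f(y)\bigr)\;=\;\tfrac12\sum_{y,z}p_yp_z\,\Phi\bigl(f(y),f(z)\bigr)\;=\;\E_p[f\log f]-\E_p f\cdot\E_p[\log f],
\end{equation*}
with $\Phi(u,v)=(u-v)\log(u/v)$ and $\bar f_p=\sum_y p_y f(y)$. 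Both sides expand directly to the displayed covariance, so the identity holds with equality; this is what forces the specific choice $\tilde f(x)=\bar f_p$ for MLSI and is the one genuinely new ingredient compared to the variance case.

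Once these pieces are in place, applying the defining inequality of $\b(G)$ (resp.\ $\varrho(G)$) to $\tilde f$, converting the Dirichlet form to $\tfrac{n-1}{n}\cdot\tfrac{2}{n-1}\sum_{y,z\neq x}\widetilde c^{\,x}_{yz}\,\Gamma_{yz}(f)$ via the identity above, replacing $\mathrm{Ent}_\mu(\tilde f)$ on the left by the smaller $\tfrac{n-1}n\mathrm{Ent}_{\mu'}(f)$, and cancelling the common factor $\tfrac{n-1}n$ yields the defining inequality of $\b(G_x)$ (resp.\ $\varrho(G_x)$) for $f$ with the same constant. Hence $\b(G_x)\ge\b(G)$ and $\varrho(G_x)\ge\varrho(G)$. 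As the ensuing discussion in the paper anticipates, the strategy breaks down for $\k(G)$ precisely because the analogous identity with the symmetric entropy $\psi(u,v)$ in place of $\Phi(u,v)$ fails; indeed $\k$ is not monotone under electric-network reduction in general.
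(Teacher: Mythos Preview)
Your proposal is correct and follows essentially the same approach as the paper. Both arguments lift a test function on $V\setminus\{x\}$ to $V$ by the harmonic extension (of $f$ for $\varrho$, of $\sqrt f$ for $\b$), use the entropy inequality $\ent_\mu(\tilde f)\ge\tfrac{n-1}{n}\ent_{\mu'}(f)$, and match the Dirichlet forms exactly. The only difference is presentational: the paper invokes the Schur-complement property that $\cL_G f(y)=\cL_{G_x}f(y)$ for $y\neq x$ whenever $f$ is harmonic at $x$, and reads off the Dirichlet-form identity from $\cE_G(f,\log f)=-\mu((\cL_G f)\log f)$, whereas you verify the same identity edge-by-edge via the covariance expansion of $\Phi(u,v)=(u-v)\log(u/v)$.
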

\begin{proof}
We give the details of the proof of $\varrho(G_x)\geq \varrho(G)$ since the other inequality can be proved  in essentially the same way. Recall that 
\begin{align}\label{mls1}
 \varrho(G)=\inf_{f}\frac{\cE_G(f,\log f)}{\ent f},
 \end{align}
 where the infimum is over all non-constant $f:V\mapsto \bbR_+$, and we use the notation
 $$\cE_G(f,g)=-\mu((\cL_G f)g),
 $$
for all functions $f,g$,  with $\cL_G$ defined as in \eqref{L}.
Fix $x\in V$. If $f$ is  harmonic at $x$, that is $\cL_G f(x) =0$, then one checks that the electric network reduction $G_x$ of $G$ at $x$ satisfies 
$\cL_G f(y)=\cL_{G_x} f(y)$ at all $y\neq x$. Therefore
$$\cE_G(f,\log f)= \frac1n\sum_{y\neq x}(-\cL_G f)(y) \log f(y)  =\frac{n-1}{n}\,\cE_{G_x}(f,\log f)
.$$
Moreover for any $x$, setting 
$\bar f_x = \frac1{n-1}  \sum_{y\neq x} f(y)$, we have 
$$\mu(f) = \frac{n-1}n \bar f_x + \frac1n f(x),$$ 
and therefore by convexity of $t\mapsto t\log t$,
\begin{align}\label{entS1}
\ent f &=\frac1n\sum_{y\neq x} f(y)\log f(y) +\frac1n f(x)\log f(x)- \mu(f)\log \bar \mu(f)
\\& = \frac{n-1}n \ent_x(f) +  \frac{n-1}n \bar f_x\log  \bar f_x + \frac1n f(x)\log f(x)- \mu(f)\log \mu(f)\\& \geq \frac{n-1}n \,\ent_x(f) ,\end{align}
where $\ent_x(f)$ denotes the entropy of $f$ with respect to the uniform distribution on the reduced set of sites $V\setminus \{x\}$. Thus, restricting to $f$ harmonic at $x$ in \eqref{mls1}, but arbitrary outside of $x$, we have proved that  
\begin{align*}
 \varrho(G)\leq \inf_{f}\frac{\cE_{G_x}(f,\log f)}{\ent_x f}=\r(G_x).
\end{align*}\qedhere 
\end{proof}
\begin{remark}\label{rem:noredo}
{\em The entropy constant $\k(G)$ does not in general satisfy the above monotonicity property. For instance, taking  $G=S_n$, the star graph considered in Proposition \ref{prop:star}, a reduction at $x=x_*$ gives that $G'=G_x$ is the complete graph on $n-1$ vertices with $c_{yz}=1/(n-1)$. One can check numerically that $\k(S_4)  =0.9217860...$, while the reduction $G'$ of $S_4$ in the center has 
	\begin{equation*}
		\k(G')=\k(K_3)/3 = \frac{4\log(2)}{3\log(3)} = 0.8412396... < \k(S_4).
	\end{equation*}
On the other hand,  by the inequalities in Lemma \ref{lem:ele}, combined with Lemma \ref{lem:redo} one has always 
 \begin{align}\label{kls2}
 \k(G_x)\geq \log(2)\k(G).
 \end{align}
}\end{remark}
The next lemma shows that the monotonicity holds for the entropy constant as well whenever $x$ is a {\em leaf} of $G$, by which we mean that $x\in V$ is such that there exists a unique $y\in V$ with $c_{xy}>0$. 

\begin{lemma}\label{lem:tree}
Suppose that $x$ is a leaf of $G$. Then $\k(G_x)\geq \k(G)$. 
\end{lemma}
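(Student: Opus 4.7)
First, I would observe that the reduction simplifies drastically when $x$ is a leaf. If $y_0$ is the unique neighbor of $x$, then $c_{xz}=0$ for every $z\neq y_0$, so the star-reduction weights $c^{*,x}_{yz}=c_{xy}c_{xz}/\sum_w c_{xw}$ all vanish. Thus $G_x$ is simply $G$ with the vertex $x$ (and the single edge $xy_0$) deleted, with weights $\widetilde c^{\,x}_{yz}=c_{yz}$ for $y,z\neq x$. This lets me pass directly between functions on $V$ and on $V\setminus\{x\}$.

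The idea is then to apply the $\k(G)$-inequality on $V$ to a carefully chosen extension of a test function on $V\setminus\{x\}$. Given a non-constant $\tilde f:V\setminus\{x\}\to\bbR_+$, I would define its extension $f:V\to\bbR_+$ by $f(z)=\tilde f(z)$ for $z\neq x$ and
\begin{equation*}
f(x)=\tilde f(y_0).
\end{equation*}
Two features drive the whole argument. First, since $f(x)=f(y_0)$ we have $\smallent_{xy_0}f=0$, and of course $c_{xz}=0$ for $z\neq y_0$, so
\begin{equation*}
\frac{2}{n}\sum_{y,z\in V}c_{yz}\,\smallent_{yz}f \;=\; \frac{2}{n}\sum_{y,z\neq x}c_{yz}\,\smallent_{yz}\tilde f \;=\; \frac{n-1}{n}\cdot\frac{2}{n-1}\sum_{y,z\neq x}c_{yz}\,\smallent_{yz}\tilde f.
\end{equation*}
That is, the Dirichlet form on $G$ evaluated at $f$ is exactly a factor $(n-1)/n$ of the Dirichlet form on $G_x$ evaluated at $\tilde f$.

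Second, I would compare the global entropies. Writing $\bar{\tilde f}=\frac1{n-1}\sum_{z\neq x}\tilde f(z)$ and $\mu(f)=\frac{n-1}{n}\bar{\tilde f}+\frac1n\tilde f(y_0)$, a short computation yields
\begin{equation*}
\ent f=\frac{n-1}{n}\ent\tilde f+\Bigl(\tfrac{n-1}{n}\bar{\tilde f}\log\bar{\tilde f}+\tfrac1n\tilde f(y_0)\log\tilde f(y_0)-\mu(f)\log\mu(f)\Bigr).
\end{equation*}
The parenthesised quantity is non-negative by Jensen's inequality for the convex map $t\mapsto t\log t$ (applied to the weights $\tfrac{n-1}{n},\tfrac{1}{n}$). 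Hence $\ent f\geq \tfrac{n-1}{n}\ent\tilde f$.

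Combining the two displays, the inequality $\k(G)\ent f\leq \tfrac{2}{n}\sum_{y,z}c_{yz}\smallent_{yz}f$ becomes, after dividing through by $(n-1)/n$,
\begin{equation*}
\k(G)\ent\tilde f\leq \frac{2}{n-1}\sum_{y,z\neq x}c_{yz}\,\smallent_{yz}\tilde f,
\end{equation*}
which is exactly $\k(G_x)\geq \k(G)$ by definition. The only real decision in the proof is the choice of extension; matching $f(x)$ to $f(y_0)$ both kills the spurious edge term and, via Jensen, produces a clean entropy comparison in the right direction. I expect the slightly subtle point to be the Jensen step, since one might be tempted instead to extend by the global mean (which makes $\ent f=\tfrac{n-1}{n}\ent\tilde f$ exactly) but that leaves a positive residual $\smallent_{xy_0}f$ on the right-hand side; the leaf structure is precisely what lets us avoid that residual by setting $f(x)=f(y_0)$.
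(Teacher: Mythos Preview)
Your proof is correct and follows essentially the same approach as the paper: extend an arbitrary $\tilde f$ on $V\setminus\{x\}$ to $f$ on $V$ by setting $f(x)=\tilde f(y_0)$, use that this kills the $xy_0$ edge term so the Dirichlet forms match up to the factor $(n-1)/n$, and then invoke the Jensen-based entropy comparison $\ent f\geq \tfrac{n-1}{n}\ent\tilde f$ (which the paper quotes from its proof of Lemma~\ref{lem:redo}). Your additional remark explaining why matching $f(x)$ to $f(y_0)$ rather than to the global mean is the right choice is a nice touch.
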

\begin{proof}
As in the proof of Lemma \ref{lem:redo} we have $\ent f\geq \frac{n-1}n\,\ent_x f$. Let $y\in V$ be the unique node with $c_{xy}>0$. Then, for any  $f:V\mapsto \bbR_+$ such that $f(y)=f(x)$, one has
$$
\frac2n\sum_{z,w\in V}c_{zw}\ent_{zw} f =\frac{n-1}{n} \,\frac2{n-1}\sum_{z,w\in V\setminus \{x\}}c_{zw}\ent_{zw} f.
$$
Since $f$ is arbitrary on $V\setminus \{x\}$ and the reduced graph $G_x$ is simply the graph $G$ without the edge $xy$ we conclude that 
$$
\k(G)\leq \frac{n-1}{n} \,\k(G_x) \frac{\ent_x f}{\ent f} \leq \k(G_x). 
$$
\end{proof}

\section{Independent particles with synchronous updates}
In this section we prove Theorem \ref{th:asyn} for the model of independent particles  with synchronous updates defined by the generator \eqref{geninda}.

\subsection{Proof of Theorem \ref{th:asyn}}
We prove only the statement for the entropy constant, since the proof works without modifications if we replace the entropy functionals by the variance functionals.
We start with the decomposition along the position of the first particle
\begin{align}\label{kgN1}
 \ent f = \nu\left[\ent(f|\xi_1)\right] + \ent[\nu(f|\xi_1)],
\end{align} 
where $\ent(f|\xi_1)=\nu\left[f\log (f/\nu(f|\xi_1))|\xi_1\right]$ is the entropy of $f$ w.r.t.\ the conditional distribution $\nu(\cdot|\xi_1)$.   
By definition of $\k[\a,N]$ we have
\begin{align}\label{kgN2}
\k[\a,N-1]\,\ent(f|\xi_1)\leq  \sum_{A\subset V}\a_A\,\nu\left[\ent_{A}(f|\xi_1)|\xi_1\right],
\end{align} 
where $\ent_{A}(f|\xi_1)=\ent(f|\xi_1,\{\eta_z,z\notin A\})$ is the conditional entropy given all occupation variables outside $A$ and given the position of the particle labeled $1$. 
Integrating,
\begin{align}\label{kgN3}
\k[\a,N-1]\,\nu\left[\ent(f|\xi_1)\right] \leq  \sum_{A\subset V}\a_A\,\nu\left[\ent_{A}(f|\xi_1)\right].
\end{align}
Another application of the decomposition, this time for $\ent_{A}$, shows that 
\begin{align}\label{kgN4}
\nu\left[\ent_{A}(f|\xi_1)\right] = \nu\left[\ent_{A}(f)\right] -  \nu\left[\ent_{A}(\nu_{A}(f|\xi_1))\right].
\end{align}
On the other hand, since $\nu(f|\xi_1)$ is a function of one particle, the 1-particle entropic constant $\k[\a]$ satisfies 
\begin{align}\label{kgN5}
\k[\a]\, \ent[\nu(f|\xi_1)]\leq  \sum_{A\subset V}\a_A\,\nu\left[\ent_{A}(\nu(f|\xi_1))\right],
\end{align} 
We are going to prove that for all $A\subset V$, $|A|\geq 2$, 
\begin{align}\label{kgconv}
\nu\left[\ent_{A}(\nu(f|\xi_1))\right]\leq \nu\left[\ent_{A}(\nu_{A}(f|\xi_1))\right].
\end{align} 
If \eqref{kgconv} holds, then by \eqref{kgN1}-\eqref{kgN5} we have obtained 
\begin{align}\label{kgN6}
 \min\{\k[\a,N-1],\k[\a]\}\,\ent f\leq \sum_{A\subset V}\a_{A}\,\nu\left[\ent_{A}(f)\right],
\end{align} 
that is $\k[\a,N]\geq \min\{\k[\a,N-1],\k[\a]\}$ for all $N\ge 2$. Iterating, this proves  $\k[\a,N]=\k[\a]$ for all $N\geq 1$.

To prove \eqref{kgconv},  we write 
$$\nu_{A}(f|\xi_1)=\nu(f|\xi_1,\{\eta_z,z\notin A\}) = \prod_{i\in\eta_A, i\neq 1}[\mu_{i,A} f](\xi),$$
where $\mu_{i,A}$ denotes the stochastic operator that equilibrates uniformly in $A$ the $i$-th coordinate $\xi_i$. Note that 
the operators $\mu_{i,A}, \mu_{j,B}$ commute for  $i\neq j$ and any $A,B\subset V$. With this notation $\nu=\prod_{i=1}^N \mu_{i,V}$ and 
$$\nu(f|\xi_1)=\prod_{ i\neq 1}\mu_{i,V}f =\prod_{ i\neq 1}\mu_{i,V}\nu_{A}(f|\xi_1) .$$  
On the other hand,
$$
\nu\left[\ent_{A}(\nu(f|\xi_1))\right]=\nu\left[{\bf 1}(\xi_1\in A)\,\smallent_A \varphi\right]= \frac{|A|}n\,\smallent_A \varphi,
$$
where $\varphi (x) =  \nu(f|\xi_1=x)$, $x\in V$. By the convexity of $\varphi \mapsto \smallent_A \varphi$, we have 
$$
\smallent_A \varphi\leq \prod_{ i\neq 1}\mu_{i,V}\,\smallent_A \nu_{A}(f|\xi_1),
$$
where $\smallent_A \nu_{A}(f|\xi_1)$ is defined as $\smallent_A \varphi_A$ with $\varphi_A(y) = \nu_{A}(f|\xi_1=y)$, for all $y\in V$. Note that $\smallent_A \nu_{A}(f|\xi_1)$ depends only on the particle positions $\xi_j$, $j\notin \eta_A$. 
Therefore, 
\begin{align}\label{kgN8}
\nu\left[\ent_{A}(\nu(f|\xi_1))\right]&\leq\nu\Big[{\bf 1}(\xi_1\in A)
\prod_{ i\neq 1}
\mu_{i,V}\,\smallent_A \nu_{A}(f|\xi_1)
\Big]\\& = 
\nu\Big[
\prod_{ i\neq 1,i\notin \eta_A}\mu_{i,V}{\bf 1}(\xi_1\in A)
\,\smallent_A \nu_{A}(f|\xi_1)
\Big]=
\nu\left[\ent_{A}(\nu_{A}(f|\xi_1))\right] .
\end{align} 
This ends the proof.

\begin{remark}\label{rem:inhom}
{\em The same proof with essentially no modification can be used to show a more general result where the uniform  measure $\nu=\mu^N$ is replaced by a non-uniform product measure $\nu=\prod_{i}\tilde \mu_i$, for some arbitrary distributions $\tilde \mu_i$ over $V$, and the process is formally defined again as in \eqref{geninda}. 
} 
\end{remark}

\section{Permutations}
In this section we prove our results about the mean field spectral gap, Theorem \ref{th:meanfieldgap}, and about the mean field entropy constant, Theorem \ref{th:meanfield2}. Then we prove Corollary \ref{cor:permanent}. Next, we compute the entropy constant of the Bernoulli-Laplace model, Theorem \ref{th:BL}. Finally, we give an example of network reduction, in the case of the star graph. Throughout this section $\mu$ denotes the uniform distribution over the symmetric group $\cS_n$. As usual $V$ is the vertex set, with $|V|=n$.

\subsection{Proof of Theorem \ref{th:meanfieldgap}}
To prove the upper bound on $\l[\a,\cS_n]$ we consider a function of the single particle position $\xi_1$, namely $f(\si)=\varphi(\xi_1)$ for some $\varphi:V\mapsto\bbR_+$. In this case, for any $A\subset V$ with $|A|=\ell$,
$$
\mu\left[\var_A f\right] = \mu\left[\ind_{\xi_1\in A}\smallvar_A \varphi \right] =\frac{\ell}n \,\smallvar_A \varphi =\frac{1}{\ell n}\sum_{x,y\in A}(\varphi(x)-\varphi(y))^2 .
$$
It follows that 
$$
\sum_{|A|=\ell}\mu\left[\var_A f\right]  
= \frac{n}{\ell}\binom{n-2}{\ell-2}\var f.$$
Therefore, any mean zero function of a single label is an eigenfunction for the mean field $\a$-shuffle operator $\cG_\a$ with eigenvalue given by \eqref{meanfieldgap1}. 
We have to prove that any other eigenfunction has a strictly larger eigenvalue. It is convenient to use the notation $\L_\ell(n)=\l[\a^\ell]$.
Let us first prove 
that for all $n\geq 2$, $\ell=2,\dots,n$, 
 \begin{align}\label{univar2}
\L_\ell(n)=\frac{n}{\ell}\binom{n-2}{\ell-2}.
\end{align}
We prove \eqref{univar2} by induction over $n$. The case $n=2$ is trivial. More generally, $\ell=n\geq 2$ is also trivial, so we pick  $\ell\in\{2,\dots,n-1\}$. We decompose the variance along the random label $\si_x$: 
 \begin{align}\label{univar3}
\var f = \mu\left[\var(f|\si_x)\right]+
\var \left[\mu(f|\si_x)\right] .
\end{align} 
By definition of $\L_\ell(n)$, 
\begin{align}\label{univar4}
\var(f|\si_x)
\leq \frac1{\L_\ell(n-1)}\sum_{|A|=\ell}\mu\left[\var_{A}f|\si_x\right]\IND_{x\notin A}.
\end{align} 
Averaging over $x\in V$, and taking the expectation with respect to $\mu$, 
\begin{align}\label{univar5}
\frac1n\sum_x\mu\left[\var(f|\si_x)\right]\leq \frac{n-\ell}{n\L_\ell(n-1)}\sum_{|A|=\ell}\mu\left[\var_{A}f\right].
\end{align} 
Using the bound in \cite[Section 5]{Shonan}
 one has the estimate, for all $f:\cS_n\mapsto\bbR$,
\begin{align}\label{univar6}
\sum_{x\in [n]}\var \, \mu(f|\si_x)
\leq \frac{n}{n-1}\, \var f. 
\end{align}
It follows from \eqref{univar3}-\eqref{univar6} that 
 \begin{align}\label{univar7}
\var f \leq \frac{(n-1)(n-\ell)}{(n-2)n\L_\ell(n-1)}\sum_{|A|=\ell}\mu\left[\var_{A}f\right].
\end{align} 
Assuming inductively that $\L_\ell(n-1)=\frac1{\ell}(n-1)\binom{n-3}{\ell-2}$, \eqref{univar7} shows that 
$$
\L_\ell(n)\geq \frac{n}{\ell}\binom{n-2}{\ell-2}.
$$
This completes the proof of \eqref{univar2} for all $n\geq 2$, $\ell=2,\dots,n$. To prove \eqref{meanfieldgap1} it suffices to observe that if $\a=\sum_\ell w_{\ell=2}^n \a^\ell$ then  necessarily
\begin{align}\label{univarco}
\l[\a,\cS_n] \geq \sum_{\ell=2}^nw_\ell\, \L_\ell(n).
\end{align}
 To prove the uniqueness part, we note that if $f$ is an eigenfunction of $\cG_\a$ that is orthogonal to all eigenfunctions of a single particle, then $\mu(f|\si_x)=0$ for all $x$, see e.g.\ \cite[Section 2.3]{CLR}. In particular, the left hand side in \eqref{univar6} must vanish, so that \eqref{univar7} becomes a strict inequality. This ends the proof of the theorem. 
  
\subsection{Proof of Theorem \ref{th:meanfield2}}
If $f$ is a Dirac mass $f=\IND_{\si_0}$ for any given permutation $\si_0\in\cS_n$, then for any $A\subset [n]$ with $|A|=\ell$,
$$\mu\left[\ent_{A}f\right]=\frac{\log \ell!}{n!}\,,\qquad \ent f =  \frac1{n!}\log n!,$$
and therefore one has the upper bound 
\begin{align}\label{kellr1}
\k[\a,\cS_n]\leq \sum_{\ell=2}^n w_\ell \frac{\binom{n}\ell \log \ell !}{\log n!},
\end{align} 
for all mean field cases. 

To prove the lower bound, it is convenient to introduce the notation $K_\ell(n) = \k[\a^\ell,\cS_n]$. Arguing as in \eqref{univarco}, it is then sufficient to prove
 that for all $n\geq 2$, for all $\ell=2,\dots,n$, 
 \begin{align}\label{keller1}
K_\ell(n)\geq  \frac{\binom{n}\ell \log \ell !}{\log n!}
\end{align} 
We prove this by induction over $n$. Clearly, the case $\ell=n$ is trivial. Thus, we pick $\ell\in\{2,\dots,n-1\}$. 
We start with the decomposition along the variable $\si_x$:
\begin{align}\label{kellr3}
\ent f = \mu\left[\ent(f|\si_x)\right]+
\ent \left[\mu(f|\si_x)\right] .
\end{align} 
By definition of $K_\ell(n)$,
\begin{align}\label{kellr4}
\ent(f|\si_x)\leq \frac{1}{K_\ell(n-1)}\sum_{|A|=\ell}\mu\left[\ent_{A}f|\si_x\right]\IND_{x\notin A}.
\end{align} 
Averaging over $x\in[n]$, and taking the expectation with respect to $\mu$,
\begin{align}\label{kellr5}
\frac1n\sum_x\mu\left[\ent(f|\si_x)\right]\leq \frac{n-\ell}{nK_\ell(n-1)}\sum_{|A|=\ell}\mu\left[\ent_{A}f\right].
\end{align} 
We turn to an estimate on the last term in \eqref{kellr3}. We start with the following useful observation.

\begin{lemma}\label{lem:equiv}
The statement \eqref{keller1} at $\ell=n-1$, that is $K_{n-1}(n) \geq n\log((n-1)!)/\log(n!)$, is equivalent to
the statement \begin{align}\label{kellero4}
\sum_x\ent \,\mu(f|\si_x)
\leq \,\frac{n\log n}{\log(n!)}\,\,\ent f,
\end{align} 
for all $f:\cS_n\mapsto\bbR_+$.
\end{lemma}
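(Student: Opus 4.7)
The proof plan is essentially a direct unpacking of definitions, exploiting the very special combinatorial structure of blocks of size $n-1$.

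First I would observe that the weight $\alpha = \alpha^{n-1}$ puts mass only on the $n$ subsets of the form $A_x := V\setminus\{x\}$, for $x\in V$. Since the uniform distribution on $\mathcal{S}_n$ fixes $\sigma_x$ once we know the labels on $A_x$, we get the key identification
\begin{equation*}
\mu_{A_x} f \;=\; \mu(f\mid \sigma_x), \qquad \ent_{A_x} f \;=\; \ent(f\mid\sigma_x).
\end{equation*}
Second, I would invoke the standard one-step entropy decomposition
\begin{equation*}
\ent f \;=\; \mu\!\left[\ent(f\mid\sigma_x)\right] + \ent\!\left[\mu(f\mid\sigma_x)\right],
\end{equation*}
which gives $\mu[\ent_{A_x} f] = \ent f - \ent[\mu(f\mid\sigma_x)]$. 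Summing over $x\in V$,
\begin{equation*}
\sum_{|A|=n-1}\mu\!\left[\ent_A f\right] \;=\; n\,\ent f \;-\; \sum_{x\in V}\ent\!\left[\mu(f\mid\sigma_x)\right].
\end{equation*}

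Third, I would plug this identity into the definition of $K_{n-1}(n)$. The inequality $K_{n-1}(n)\,\ent f \leq \sum_{|A|=n-1}\mu[\ent_A f]$ is then equivalent to
\begin{equation*}
\sum_{x\in V}\ent\!\left[\mu(f\mid\sigma_x)\right] \;\leq\; \bigl(n - K_{n-1}(n)\bigr)\,\ent f,
\end{equation*}
valid for all $f:\mathcal{S}_n\to\mathbb{R}_+$. It remains to verify the arithmetic identity
\begin{equation*}
n \;-\; \frac{n\log((n-1)!)}{\log(n!)} \;=\; \frac{n\bigl[\log(n!)-\log((n-1)!)\bigr]}{\log(n!)} \;=\; \frac{n\log n}{\log(n!)},
\end{equation*}
which gives exactly the constant appearing in \eqref{kellero4}. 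Since the equivalence at the level of the best constants is immediate once we pass through the identity above, no obstacle arises; the lemma is really an observation that the sharp subadditivity estimate \eqref{kellero4} is nothing more than a reformulation of the $\ell=n-1$ mean field entropy inequality.
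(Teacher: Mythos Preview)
Your proposal is correct and follows essentially the same approach as the paper: both use the identification $\ent_{V\setminus\{x\}} f = \ent(f\mid\sigma_x)$, the one-step entropy decomposition, summation over $x$, and the arithmetic identity $n - n\log((n-1)!)/\log(n!) = n\log n/\log(n!)$. The paper's version is marginally more terse, but there is no substantive difference.
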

\begin{proof}
By definition of the entropy constant $K_{n-1}(n) $ one has
\begin{align}\label{kellero51}
K_{n-1}(n)\,\ent f\leq \sum_{x\in V}\mu\left[\ent_{V\setminus \{x\}} f\right],
\end{align}
for all $f:\cS_n\mapsto \bbR_+$. 
Using \eqref{kellr3}, and the identity $\ent(f|\si_x)=\ent_{V\setminus \{x\}} f$, we see that if $ K_{n-1}(n) \geq n\log((n-1)!)/\log(n!)$ then
\begin{align}\label{kellero5}
\frac1n\sum_x\ent \,\mu(f|\si_x) &=  \ent f -  \frac1n\sum_x\mu\left[\ent_{V\setminus \{x\}} f\right]\\& \leq 
(1- K_{n-1}(n)/n)\,\ent f =\frac{\log n}{\log(n!)}\,\ent f. 
\end{align}
The other implication is also an immediate consequence of the first identity in \eqref{kellero5}.
\end{proof}
Let us now show how to use Lemma \ref{lem:equiv} to conclude the proof of Theorem \ref{th:meanfield2}. As in the proof of Theorem \ref{th:meanfield1}, let $\k_\ell(n)$ denote the 1-particle entropy constant $\k[\a^\ell]$. We have shown that 
\begin{align}\label{pfth400}
 \k_\ell(n) 
=  \frac{\binom{n-1}{\ell-1}\log \ell}{\log n}.
\end{align}
Setting $\varphi_x(i)=\mu(f|\si_x=i)$, $i=1,\dots,n$, by definition of $ \k_\ell(n) $ one has
\begin{align}\label{kellr10}
\ent \left[\mu(f|\si_x)\right]\leq \frac{\ell}{n\k_\ell(n)}\, \sum_{|B|=\ell}{\rm ent}_B \varphi_x .
\end{align} 
We view $B$ as a set of labels, and write $\cS_B$ for the set of permutations of the labels in $B$. We introduce the notation, for $u=\{u_\pi,\,\pi\in \cS_B\}\in\bbR_+^{\ell!}$,
\begin{align}\label{kellr91}
\Psi(u) = \frac1{\ell!}\sum_{\pi\in \cS_B} u_\pi\log (u_\pi/\bar u)\,,\qquad \bar u =  \frac1{\ell!}\sum_{\pi\in \cS_B} u_\pi.
\end{align}
We note that if $i\in B$ is fixed and $u_\pi = \mu(f|\si_x=\pi(i))$, with $\pi\in \cS_B$, then 
\begin{align}\label{kellr92}
{\rm ent}_B \varphi_x=\Psi(\{\mu(f^\pi|\si_x=i),\,\pi\in \cS_B\}),
\end{align}
where we use the relation 
$$
\mu(f|\si_x=\pi(i))= \mu(f^\pi|\si_x=i),
$$
with the notation $f^\pi$ for the function $f$ calculated at a configuration after the labels in $B$ have been rearranged according to $\pi$. 
Next, notice that 
$$
\mu(f|\si_x=\pi(i))=\mu(g_{i,B}|\si_x=\pi(i))= \mu(g_{i,B}^\pi|\si_x=i),
$$
where $g_{i,B} := \mu(f|\xi_i,\xi_{B^c})$ denotes the conditional expectation of $f$ given the positions of label $i$ 
and the positions of all labels $j\notin B$. 
Since $u\mapsto \Psi(u)$ is  convex, one has, for any $i\in B$:
\begin{align}\label{kellr93}
{\rm ent}_B \varphi_x &= \Psi(\{\mu(g_{i,B}^\pi|\si_x=i),\,\pi\in \cS_B\})\\& 
\leq \mu\left(\Psi(\{g_{i,B}^\pi,\,\pi\in \cS_B\})| \si_x=i\right)
= n\,\mu\left(\Psi(\{g_{i,B}^\pi,\,\pi\in \cS_B\}); \si_x=i\right).
\end{align}
This holds for all $i\in B$ fixed, and noting that $\Psi(\{g_{i,B}^\pi,\,\pi\in \cS_B\})$ does not depend on the choice of the label $i\in B$, one has for any $i\in B$:
\begin{align}\label{kellr94}
{\rm ent}_B \varphi_x &
\leq \frac{n}\ell\,\mu\left(\Psi(\{g_{i,B}^\pi,\,\pi\in \cS_B\}); \si_x\in B\right).
\end{align}
Averaging over $x\in[n]$ we obtain, for any $i\in B$:
 \begin{align}\label{kellr95}
\frac1n\sum_x{\rm ent}_B \varphi_x &
\leq \mu\left(\Psi(\{g_{i,B}^\pi,\,\pi\in \cS_B\})\right).
\end{align}
From \eqref{kellr10} it follows that
\begin{align}\label{kellr96}
\frac1n\sum_x
\ent \left[\mu(f|\si_x)\right]&\leq 
\frac{\ell}{n\k_\ell(n)}\, \sum_{|B|=\ell}\mu\left(\Psi(\{g_{i,B}^\pi,\,\pi\in \cS_B\})\right)\\&=
\frac{1}{n\k_\ell(n)}\, \sum_{|B|=\ell}\sum_{i\in B}\mu\left(\Psi(\{g_{i,B}^\pi,\,\pi\in \cS_B\})\right).
\end{align}
There is precisely one set of vertices $A\subset V$ such that $\si_A=\xi_B$ and for such $A$ there is precisely one $x\in A$ such that $\si_x=i$, and  in this case $$\ent_A\mu_A(f|\si_x) =\ent_A \mu(f|\si_x,\si_{A^c}) = \Psi(\{g_{i,B}^\pi,\,\pi\in \cS_B\}).$$ 
Therefore, we arrive at 
\begin{align}\label{kellr961}
\frac1n\sum_x
\ent \left[\mu(f|\si_x)\right]&\leq 
\frac{1}{n\k_\ell(n)}\, \sum_{|A|=\ell}\sum_{x\in A}
\mu\left(\ent_A \mu_A(f|\si_x) \right).
\end{align}
%
Since we assume inductively that \eqref{keller1} holds up to $n-1$ and for all $\ell=2,\dots,n-1$, it follows from Lemma \ref{lem:equiv} that for any $|A|<n$ we have the estimate
\begin{align}\label{keller96}
\sum_{x\in A}
\ent_A \mu_A(f|\si_x) \leq \frac{|A|\log (|A|)}{\log (|A| !)}\,\ent_A f.
\end{align}
Therefore, for any $\ell=2,\dots,n-1$, 
\begin{align}\label{kellr960}
\frac1n\sum_x
\ent \left[\mu(f|\si_x)\right]&\leq 
\frac{\ell\log \ell}{n\k_\ell(n)\log(\ell!)}\, \sum_{|A|=\ell}
\mu\left[\ent_A f \right].
\end{align}
Using \eqref{pfth400}, we see that $$
\frac{\ell\log \ell}{n\k_\ell(n)\log(\ell!)} = \frac{\log n}{\binom{n}{\ell}\log \ell !} .
$$
In conclusion, combining \eqref{kellr960} and \eqref{kellr5} we have shown that 
\begin{align}\label{kellera5}
\ent f
\leq \left(\frac{(n-\ell)}{nK_\ell(n-1)} + \frac{\log n}{\binom{n}{\ell}\log \ell !}\right) \sum_{|A|=\ell}
\mu\left[\ent_A f \right].
\end{align} 
Using the inductive assumption \eqref{keller1} again for $K_\ell(n-1)$ we obtain
\begin{align}\label{kelleri5}
\ent f
\leq \frac{\log(n!)}{\binom{n}\ell \log(\ell!)} \sum_{|A|=\ell}
\mu\left[\ent_A f \right],
\end{align} 
which proves the inequality \eqref{keller1}. This proves \eqref{th:meanfield2}. 

To prove the uniqueness of the minimizers, observe that by the uniqueness in Theorem \ref{th:meanfield1} the only way to saturate the inequality \eqref{kellr10} for all $x$ is to have 
$\mu(f|\si_x)$ a multiple of a Dirac mass for all $x$, say $\mu(f|\si_x)=t_x\d_{i_x}$ for some $t_x>0$ and some label $i_x$. However, $\mu(f) = \mu(\mu(f|\si_x))$ implies $t_x=t=n\mu(f)$ for all $x$, for some $t>0$. Then $g(\si)/n!=n f(\si)/(tn!)$ is a probability on $\cS_n$ with marginal $\d_{i_x}(\si_x)$ at $\si_x$.  
Since all marginals are deterministic it follows that $g$ is deterministic and thus $g(\si)=n!\d_{\si_0}$ for some $\si_0\in\cS_n$, that is $f$ is a multiple of a Dirac mass. This ends the proof of Theorem \ref{th:meanfield2}.

\subsection{Proof of Corollary \ref{cor:permanent}}
Set $p_c= \frac{n\log n}{\log(n!)}$. 
From Theorem \ref{th:meanfield2} and Lemma \ref{lem:equiv} we know that for any $f:\cS_n\mapsto\bbR_+$ one has
\begin{align}\label{kelleros4}
\sum_x\ent \,\mu(f|\si_x)
\leq p_c\,\ent f.
\end{align} 
For any collection of functions $\varphi_x: V\mapsto \bbR_+$, an application of \eqref{kelleros4} with the choice $f(\si)=\prod_x \varphi_x(\si_x)$ and the variational principle for entropy show that 
\begin{align}\label{perma3}
\mu\left[\prod_{x\in V}\varphi_x(\si_x)\right]\leq \prod_{x\in V}\mu\left[\varphi_x(\si_x)^{p_c}\right]^{1/p_c},
 \end{align} 
 see \cite[Theorem 2.1]{CarlenCordero}. 
Consider now the matrix $A=(a_{x,y})$ such that $a_{x,y}=\varphi_x(y)$. The left hand side above equals $(1/n!){\rm perm}(A)$, while for every $x$:
$$
\mu\left[\varphi_x(\si_x)^{p_c}\right]^{1/p_c} = n^{-1/p_c}\|R_x\|_{p_c} = \left(\frac1{n!}\right)^{1/n}\|R_x\|_{p_c},
$$ 
where $R_x$ denotes the $x$-th row of $A$. Therefore \eqref{perma3} proves the statement \eqref{perma2}. Moreover, the argument leading from \eqref{kelleros4} to \eqref{perma3} also shows that  if \eqref{perma3} is an equality for some functions $\varphi_x$, then 
\eqref{kelleros4} must be an equality with $f(\si)=\prod_x \varphi_x(\si_x)$, see \cite[Theorem 2.2]{CarlenCordero}. 
By the uniqueness in Theorem \ref{th:meanfieldent} this is only possible if either $f$ is constant, or if $f$ is a multiple of a Dirac mass. 
  In the first case $A$ is a scalar multiple of the all $1$ matrix, while in the second it is the identity matrix up to multiplication by a scalar and up to permutation of the rows. This proves the corollary at $p=p_c$. As already noted in \cite[Lemma 1]{Samorodnitsky}, this is sufficient to prove the desired statement for all $p\geq 1$.


\subsection{Proof of Theorem \ref{th:BL}}
Here we consider the entropy constant for Bernoulli-Laplace and prove the identity stated in Theorem \ref{th:BL}.
We have $r$ indistinguishable particles and $\mu$ is the uniform distribution over all $\binom{n}r$ configurations. 
We may rewrite the entropy constant $\k(n,r)$ as the best constant $\k\geq 0$ in the inequality
\begin{align}\label{knr}
\k(n,r)\,\ent f  
\leq \frac12\sum_{x,y} \mu(\ent_{xy}f).
\end{align} 
 We use the notation $\eta_x\in\{0,1\}$ for the occupation variable at site $x$, and write $\r=r/n$ for the particle density. The upper bound $$\k(n,r)\leq \frac{r(n-r)\log(2)}{\log\binom{n}r}$$
 follows by choosing test function $f=\IND(\eta=\eta_0)$ for some fixed configuration $\eta$. Indeed, in this case one has $$\ent f = \log\binom{n}r/\binom{n}r\,,\quad \frac12\sum_{y,z}\mu(\ent_{yz}f) = r(n-r)\log(2)/\binom{n}r.$$
For the lower bound we use the recursive approach as in the proof of Theorem \ref{th:meanfield1}. Thus, for a fixed $x$ we write
\begin{align}\label{knr2bl}
\ent f =\mu\left[\ent(f|\eta_{x})\right]+
\ent \left[\mu(f|\eta_{x})\right] ,
\end{align} 
and observe that by definition of $\k(n,r) $ one has
\begin{align}\label{knr3bl}
&\mu\left[\ent(f|\eta_{x})\right]
=\r\,\ent(f|\eta_{x}=1)+(1-\r)\ent(f|\eta_{x}=0)\\&
\qquad\leq  \frac{\r}{2\k(n-1,r-1)} \sum_{y,z\neq x} \mu(\ent_{yz}f|\eta_x=1) + 
\frac{1-\r}{2\k(n-1,r)} \sum_{y,z\neq x} \mu(\ent_{yz}f|\eta_x=0).
\end{align} 
Note that 
$$
\ent_{yz}f=\psi(f,f^{yz})\eta_y(1-\eta_z) + \psi(f,f^{yz})\eta_z(1-\eta_y),
$$
where $\psi$ is defined in \eqref{def:psir} and $f^{yz}$ denotes the function $f$ evaluated at the configuration where the occupation variables at $y$ and $z$ have been swapped. Averaging over $x$, 
\begin{align}\label{knr4bl}
\frac1n\sum_x \mu\left[\ent(f|\eta_{x})\right]
&\leq \left(\frac{r-1}{n\k(n-1,r-1)}+\frac{n-r-1}{n\k(n-1,r)}\right) \frac12\sum_{y,z} \mu(\ent_{yz}f).
\end{align} 
We turn to the estimate of $\ent \left[\mu(f|\eta_{x})\right]$. Recalling the definition \eqref{def:psir}, 
\begin{align}\label{knr5}
\ent \left[\mu(f|\eta_{x})\right] = \psi_\r(\mu(f|\eta_{x}=1),\mu(f|\eta_{x}=0).
\end{align} 
We write
\begin{align}\label{knr56}
\mu(f|\eta_x=0) &= \frac1{n\r(1-\r)} \sum_{y} \mu(f(1-\eta_x)\eta_y)
\\ & = \frac1{n\r(1-\r)} \sum_{y} \mu(f^{xy}(1-\eta_y)\eta_x)
\\&= \frac1{n(1-\r)} \sum_{y} \mu(f^{xy}(1-\eta_y)|\eta_x=1).
\end{align}
By convexity of $\psi_\r$,
\begin{align}\label{ipkn7}
\psi_\r(\mu(f|\eta_{x}=1),\mu(f|\eta_{x}=0)& \leq  \frac1{n(1-\r)} \sum_{y} \mu((1-\eta_y)\psi_\r(f,f^{xy})|\eta_x=1)
\\& = \frac1{n\r(1-\r)} \sum_{y} \mu(\eta_x(1-\eta_y)\psi_\r(f,f^{xy})) 
.
\end{align}
Summing over all $x$ we have obtained 
\begin{align}\label{knr9}
\frac1n\sum_x\ent \left[\mu(f|\eta_{x})\right] &\leq  \frac{1}{n^2\r(1-\r)} \sum_{x,y} \mu(\eta_x(1-\eta_y)\psi_\r(f,f^{xy})).
\end{align} 
Since $f^{xy}=f^{yx}$, 
\begin{align}\label{knr009}
\sum_{x,y} \mu(\eta_x(1-\eta_y)\psi_\r(f,f^{xy}))=\sum_{x,y} \mu(\eta_y(1-\eta_x)\psi_\r(f,f^{xy})).
\end{align} 
Since $\mu(g)=\mu(g^{xy})$ for any function $g$, we also have, for all $x,y$: $$\mu(\eta_y(1-\eta_x)\psi_\r(f,f^{xy}))=\mu(\eta_x(1-\eta_y)\psi_\r(f^{xy},f)).$$
If we define $\bar\psi_\r(s,t)=\frac12(\psi_\r(s,t)+\psi_\r(t,s))$, we have shown that
\begin{align}\label{knr0009}
\frac1n\sum_x\ent \left[\mu(f|\eta_{x})\right] &\leq  \frac{1}{n^2\r(1-\r)} \sum_{x,y} \mu(\eta_x(1-\eta_y)\bar\psi_\r(f,f^{xy}))\\&=
\frac{1}{2n^2\r(1-\r)} \sum_{x,y} \mu(\bar\psi_\r(f,f^{xy})).
\end{align} 
From Corollary \ref{cor:tec1} we see that 
\begin{align}\label{ipkn08}
\bar \psi_\r(s,t) \leq  \frac{h(\r)}{\log(2)}\,\psi(s,t),
\end{align}
for all $\r\in[0, 1]$, $s,t>0$, with $h(\r)=-\r\log \r - (1-\r)\log(1-\r)$.

From \eqref{knr4bl} and \eqref{knr0009},
\begin{align}\label{knr10}
\frac{1}{\k(n,r)}\leq \frac{r-1}{n\k(n-1,r-1)}+\frac{n-r-1}{n\k(n-1,r)} + \frac{h(\r)}{n^2\r(1-\r)\log(2)}.
\end{align}
Assume inductively that $\k(n-1,r-1)\geq \bar\k(n-1,r-1)$ and $\k(n-1,r)\geq \bar\k(n-1,r)$, where 
\begin{align}\label{thknr0bar}
\bar\k(n,r)=\frac{r(n-r)\log(2)}{\log\binom{n}r}.
\end{align}  
Using the relations $\binom{n}r = \frac{n}r\binom{n-1}{r-1}=\frac{n}{n-r}\binom{n-1}{r}$ one has
\begin{align}\label{knr100}
\frac{r-1}{n\bar \k(n-1,r-1)}+\frac{n-r-1}{n\bar \k(n-1,r)} =\frac1{\bar\k(n,r)} - \frac{h(\r)}{2n^2\r(1-\r)\log(2)}.
\end{align}
Therefore \eqref{knr10} proves that $\k(n,r)\geq\bar\k(n,r)$. This ends the proof, since the bound is trivially satisfied at $n=2$.

\begin{remark}\label{rem:multislice}
{\em We have restricted ourselves to the case of one type of indistinguishable particles only, but one could consider several types of particles, that is the case where there are $m$ colors and we have $r_i$ particles of color $i$, for each $i=1,\dots,m$, and $\vec{r}=(r_1,\dots,r_m)\in\bbN^m$  with $\sum_{i=1}^mr_i=n$. The case $m=2$ is covered by Theorem \ref{th:BL}. The general problem is often referred to as the {\em multislice} model. We refer to \cite{multi1,multi2} for recent work on the logarithmic Sobolev constant for the multislice. In particular, for all values of the vector $\vec{r}$, Salez \cite{multi2} obtained bounds on the  log-Sobolev constant that are tight up to a constant factor $4/\log(2)$. It is possible that a refinement of the argument in the proof of Theorem \ref{th:BL} would allow an exact computation of the entropy constant $\k(n, \vec{r})$ of the multislice with color profile $\vec r$. This is defined formally as in \eqref{knr}, but now $\mu$ is the uniform distribution over all $\binom{n}{\vec r}$ configurations, where $\binom{n}{\vec r}$ denotes the multinomial coefficient of $n$ and $\vec{r}$. 
 In this respect we may conjecture that the constant $\k(n, \vec{r})$ is again attained at a Dirac mass, which, after optimisiaztion over the color profile,  would give 
\begin{align}\label{conjmulti}
\k(n,\vec{r})=\min_{\vec{v}\sim \vec{r}}d(n,\vec{v}),\quad d(n,\vec{v}):=\frac12\sum_i \frac{v_i(n-v_i)\log(2)}{\log\binom{n}{\vec v}}
\end{align}
where $\vec{v}\sim \vec{r}$ means that $\vec{v}$ is coarser than $\vec r$, that is $\vec v\in\bbN^\ell$ with $\sum_{i=1}^\ell v_i=n$, $\ell\leq m$ is a vector obtained from $\vec r$ by repeatedly merging two entries
into one, that is by identifying certain colors.  Note that in the extremal case $m=n$, $r_i\equiv 1$, \eqref{conjmulti} coincides with the case $\ell=2$ of Theorem \ref{th:meanfield2}, while in the case $m=2$ it is equivalent to Theorem \ref{th:BL}.
}\end{remark}
\subsection{Network reduction: the case of the star graph}
Here we obtain bounds on $\k[\a,\cS_n]$ for a star graph, that is $c_{xy} = \ind_{x_*\in xy}$,  for some fixed $x_*\in V$, the center of the star. Equivalently, we take $\a_A=0$ for all $|A|\neq 2$ and $\a_{xy}=2\ind_{x_*\in xy}$. This illustrates a possible use of the network reduction approach and its shortcomings. 
\begin{proposition}\label{th:IPSn}
The star graph  $\a_A=0$ for all $|A|\neq 2$ and $\a_{xy}=2\ind_{x_*\in xy}$ satisfies
 \begin{align}\label{ipkn}
\frac{2\log 2}{\log n}\geq \k[\a,\cS_n] \geq \frac{(\log 2)^2}{\log n}.
\end{align} 
\end{proposition}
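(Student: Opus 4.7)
For the upper bound in \eqref{ipkn}, the plan is to use one-particle test functions and invoke Proposition~\ref{prop:star}. Given $\varphi:V\to\bbR_+$, set $f(\sigma):=\varphi(\xi_1)$ with $\xi_1=\sigma^{-1}(1)$. Since $\xi_1$ is uniform on $V$ under $\mu$, one has $\ent f=\ent\varphi$. Conditional on $\sigma_z$ for $z\notin\{x_*,y\}$, the function $f$ is constant unless $1\in\{\sigma_{x_*},\sigma_y\}$ (an event of probability $2/n$), in which case $f$ takes the values $\varphi(x_*),\varphi(y)$ with equal probability, giving $\mu[\ent_{x_*y}f]=(2/n)\,\smallent_{x_*y}\varphi$. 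Summing and using $\a_{x_*y}=2$, one finds $\sum_y\a_{x_*y}\mu[\ent_{x_*y}f]=(4/n)\sum_{y\neq x_*}\smallent_{x_*y}\varphi$, which is precisely the one-particle Dirichlet form for $S_n$. Minimising over $\varphi$ yields $\k[\a,\cS_n]\leq \k(S_n)\leq 2\log(2)/\log n$.

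For the lower bound I will compare with the full interchange process on $K_n$, whose entropy constant is known to equal $n(n-1)\log(2)/\log(n!)$ by Theorem~\ref{th:meanfieldent}. The comparison is carried out via an entropic variant of the octopus inequality at the center $x_*$: applying the variance octopus inequality of \cite{CLR} to $\sqrt f$, with $c_{xy}=\ind_{x_*\in xy}$ and $c^{*,x_*}_{yz}=1/(n-1)$, and then using \eqref{ineqents1} on both sides to pass back to entropies, one obtains
\begin{equation*}
\sum_{\substack{y,z\neq x_*\\ y\neq z}}\mu[\ent_{yz}f]\;\leq\; \tfrac{n-1}{\log 2}\,D_*(f),\qquad D_*(f):=\sum_{y\neq x_*}\a_{x_*y}\mu[\ent_{x_*y}f].
\end{equation*}
Adding in the two star edges gives the key estimate $\sum_{x\neq y}\mu[\ent_{xy}f]\leq (n/\log 2)\,D_*(f)$, bounding the full $K_n$-Dirichlet form by the star Dirichlet form.

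Combining this with Theorem~\ref{th:meanfieldent} applied to $K_n$ (with weights $\a_A=2\ind_{|A|=2}$, yielding $\tfrac{n(n-1)\log 2}{\log(n!)}\ent f\leq \sum_{x\neq y}\mu[\ent_{xy}f]$) produces
\begin{equation*}
\ent f\;\leq\; \tfrac{\log(n!)}{n(n-1)\log 2}\cdot\tfrac{n}{\log 2}\,D_*(f)\;=\; \tfrac{\log(n!)}{(n-1)(\log 2)^2}\,D_*(f),
\end{equation*}
so that $\k[\a,\cS_n]\geq (n-1)(\log 2)^2/\log(n!)\geq (\log 2)^2/\log n$, where the last step uses the elementary inequality $n^{n-1}\geq n!$ (straightforward by induction). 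The main source of inefficiency in this argument is the factor $1/\log 2$ lost when upgrading the variance octopus inequality to its entropic form; this loss accounts exactly for the $\log(2)$ gap between the two bounds in \eqref{ipkn}. A positive answer to the question raised after \eqref{entoctopus}, namely whether the entropic octopus inequality holds without the $1/\log(2)$ factor, would immediately close this gap and give $\k[\a,\cS_n]\sim 2\log(2)/\log n$.
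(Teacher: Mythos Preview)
Your proof of both bounds is correct, and for the lower bound you take a genuinely different and somewhat more direct route than the paper. The paper decomposes $\ent f=\mu[\ent(f\mid\si_{x_*})]+\ent[\mu(f\mid\si_{x_*})]$ and treats the two pieces separately: the first via the mean-field constant $\k[\a^2,\cS_{n-1}]$ on $V\setminus\{x_*\}$ followed by the variance octopus plus \eqref{ineqents1} to convert back to star edges, and the second via the one-particle constant $\k(K_n)$ together with a convexity argument. You instead bypass the martingale decomposition entirely: you bound the full $K_n$ entropic Dirichlet form $\sum_{x\neq y}\mu[\ent_{xy}f]$ by $(n/\log 2)D_*(f)$ using the variance octopus at $x_*$ and \eqref{ineqents1}, and then invoke Theorem~\ref{th:meanfieldent} once, for $K_n$. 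Both arguments use exactly the same external inputs (the CLR octopus, \eqref{ineqents1}, and Theorem~\ref{th:meanfieldent}), and both land on a lower bound of the form $(n-1)(\log 2)^2/\log(n!)\geq(\log 2)^2/\log n$; your route is shorter and avoids the separate treatment of $\ent[\mu(f\mid\si_{x_*})]$.

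One small correction to your closing remark: even granting the entropic octopus \eqref{entoctopus} without the $1/\log 2$ loss, your argument would yield $\k[\a,\cS_n]\geq(n-1)\log 2/\log(n!)\sim\log 2/\log n$, not $2\log 2/\log n$. So the entropic octopus would reduce the gap in \eqref{ipkn} from a factor $2/\log 2$ to a factor $2$, but would not close it; the paper makes the same observation after its proof.
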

\begin{proof}
By definition, the constant $\k[\a,\cS_n]$ is the largest $\k\geq 0$ such that for all functions $f:\cS_n\mapsto\bbR_+$ one has
\begin{align}\label{ipokn}
\k\,\ent f\leq 2\sum_y \mu\left[\ent_{x_*y}f\right].
\end{align} 
The upper bound on $\k[\a,\cS_n]  $ follows by using the test function $f(\si)=\IND(\xi_1=z)$ for a fixed $z\neq x_*$. Indeed, as in Proposition \ref{prop:star} this gives $\k[\a,\cS_n]  \leq \k(S_n)  \leq \frac{2\log 2}{\log n}$. 

To prove the lower bound we first recall that by Theorem \ref{th:meanfield1} we know that if  
$\k_n=\k(K_n)=2(n-1)\log(2)/\log n$ as in \eqref{kKn}, then for any $\varphi:[n]\mapsto\bbR_+$ with $\sum_{i=1}^n\varphi_i=n$, 
\begin{align}\label{ipkn2}
\k_n \sum_{i=1}^n \varphi_i\log\varphi_i \leq 2\sum_{i,j=1}^n\psi(\varphi_i,\varphi_j).
\end{align}   
Therefore, for any $f\ge 0$ such that $\mu[f]=1$, for any $x\in[n]$ we have
\begin{align}\label{ipkn3}
\k_n\ent \left[\mu(f|\si_x)\right] \leq \frac2n
\sum_{i,j=1}^n\psi(\mu(f|\si_x=i),\mu(f|\si_x=j)).
\end{align}   
For all $i\neq j$ and any $x$ we have
\begin{align}\label{ipkn4}
\mu(f|\si_x=j) &= n \sum_{\si} \mu(\si)f(\si)\IND(\si_x=j) = \frac1{(n-1)!} \sum_{\si} \sum_y f(\si)\IND(\si_x=j)\IND(\si_y=i)\\& =
\frac1{(n-1)!} \sum_y \sum_{\si} f(\si^{x,y})\IND(\si_x=i)\IND(\si_y=j) = \frac1{(n-1)!} \sum_{\si} f(\si^{\xi_i,\xi_j})\IND(\si_x=i)\\& =  
\mu(f^{\xi_i,\xi_j}|\si_x=i).
\end{align}
By convexity,
\begin{align}\label{ipkn5}
\psi(\mu(f|\si_x=i),\mu(f|\si_x=j))& =  
\psi(\mu(f|\si_x=i),\mu(f^{\xi_i,\xi_j}|\si_x=i)) \\& \leq \mu(\psi(f,f^{\xi_i,\xi_j})|\si_x=i).
\end{align}
Thus, \eqref{ipkn3} shows that for any $x$:
\begin{align}\label{ipkn6}
\k_n\ent \left[\mu(f|\si_x)\right] &\leq \frac2n
\sum_{i,j=1}^n\mu(\psi(f,f^{\xi_i,\xi_j})|\si_x=i)\\&=
2\sum_{y}\mu(\psi(f,f^{x,y})) = 
2\sum_{y}\mu\left[\ent_{xy}f\right].
\end{align}   
We apply this with $x=x_*$. 
 We write
\begin{align}\label{ipkn77}
\ent f =\mu\left[\ent(f|\si_{x_*})\right]+
\ent \left[\mu(f|\si_{x_*})\right] ,
\end{align} 
and observe that by definition of the mean field constant $\k[\a^2,\cS_n] $ 
and the inequality \eqref{ipkn6} we have
\begin{align}\label{ipkn88}
\ent f &\leq \frac1{2\k[\a^2,\cS_{n-1}]}\sum_{y,z\neq x_*}
\mu\left[\ent_{yz}f\right]+\frac{2}{\k_n}\sum_{y}\mu\left[\ent_{x_*y}f\right].
\end{align} 
Next, for any $y,z$,  from \eqref{ineqents1} we have
\begin{align}\label{ipkn89}
2\log(2)\mu\left[\var_{yz}\sqrt f\right]\leq \mu\left[\ent_{yz}f\right]\leq 2\mu\left[\var_{yz}\sqrt f\right].
\end{align} 
From the octopus inequality for the variance \cite[Theorem 2.3]{CLR} it follows that  
\begin{align}\label{ipkn90}
\sum_{y,z\neq x_*}
\mu\left[\ent_{yz}f\right]
\leq 4(n-1)\sum_{y}
\mu\left[\var_{x_*y}\sqrt f\right]\leq \frac{2(n-1)}{\log(2)}\sum_{y}
\mu\left[\ent_{x_*y}f\right].
\end{align} 
In conclusion,
\begin{align}\label{ipkn91}
\ent f &\leq \left(\frac{(n-1)}{2\log(2)\k[\a^2,\cS_{n-1}]}+\frac{1}{\k_n}\right) 2\sum_{y}
\mu\left[\ent_{x_*y}f\right].
\end{align} 
From Theorem \ref{th:meanfield2} we known that $\k[\a^2,\cS_{n-1}]=\frac{\binom{n-1}2 \log (2)}{\log (n-1)!}$. Therefore, the constant $\k[\a,\cS_n]$ for the star satisfies 
\begin{align}\label{ipkn92}
\frac{1}{\k[\a,\cS_n]}&\leq \frac{(n-1)}{2\log(2)\k[\a^2,\cS_{n-1}]}+\frac{1}{\k_n}\\&=\frac{\log n}{\log 2} \left(\frac{\log((n-1)!)}{\log(2)(n-2)\log n}+\frac{1}{2(n-1)}\right)\leq \frac{\log n}{(\log 2)^2}.
\end{align} 
\end{proof}
The bounds  in Theorem \ref{th:IPSn} have a mismatch of a factor $2/\log(2)$. This could be improved to a factor 2 if one had the entropic octopus inequality \eqref{entoctopus}, since we could dispense with the estimate \eqref{ipkn89} in this case. We remark that using a Dirac mass at a given permutation shows that 
\begin{align}\label{ipkndirac}
\k[\a,\cS_n]\leq 2\log(2)\frac{(n-1)}{\log(n!)},
\end{align} 
which is asymptotically equivalent to the constant $\k(S_n)\sim 2\log(2)/\log n$ in Proposition \ref{prop:star}, in contrast with the case of the complete graph where the Dirac mass at a permutation gives a constant that is asymtptotically twice as small as the single particle constant, see Remark \ref{rem:entperm}. However, a numerical calculation for $S_4$ shows that for the star graph one should not expect $\k[\a,\cS_n]$  and $\k(S_n)$ to be equal. 

\bibliographystyle{plain}
\bibliography{bibentropy}

\end{document}